


\documentclass[12pt]{amsart}
\usepackage[latin1]{inputenc}	

\usepackage{soul}
\usepackage{hyperref}
\hypersetup{
    colorlinks=false,
    linkcolor=blue,
    filecolor=magenta,      
    urlcolor=cyan,
}

\urlstyle{same}

\frenchspacing

\textheight=21cm 
\textwidth=17.3cm 
\hoffset=-2.2cm 
\voffset=-1cm
\parindent=16pt 

\usepackage[english]{babel}
\usepackage{amsmath,amsthm}
\usepackage{amssymb,latexsym}
\usepackage{graphicx}

\newcommand{\C}{\mathbb{C}}

\newcommand{\N}{\mathbb{N}}
\newcommand{\Q}{\mathbb{Q}}
\newcommand{\R}{\mathbb{R}}
\newcommand{\Z}{\mathbb{Z}}

\newtheorem{theorem}{Theorem}[section]

\theoremstyle{remark}
\newtheorem{remark}{Remark}[section]

\allowdisplaybreaks

\begin{document}
\author{Raffaele Marcovecchio}

\title[Vectors of Type II Hermite-Pad\'e approximations]{Vectors of 
				  Type II Hermite-Pad\'e approximations\\ and 
	   				a new linear independence criterion}

\dedicatory{Dedicated to Wadim Zudilin, with warm wishes, 
					on the occasion of his 50th birthday}
\address{}
\email{raffaele.marcovecchio@unich.it}

\date{July 4, 2020}

\begin{abstract}
We propose a linear independence criterion, and outline an application of it. 
Down to its simplest case, it aims at solving this problem: given three real numbers, 
typically as special values of analytic functions, how to prove that the $\Q$-vector 
space spanned by $1$ and those three numbers has dimension at least 3, whenever we are 
unable to achieve full linear independence, by using simultaneous approximations, 
i.e. those usually arising from Hermite-Pad\'e approximations of type II and their suitable 
generalizations. It should be recalled that approximations of type I and II are related, at 
least in principle: when the numerical application consists in specializing actual functional 
constructions of the two types, they can be obtained, one from the other, as explained in a 
well-known paper by K.Mahler \cite{Mahler}. That relation is reflected in a relation between 
the asymptotic behavior of the approximations at the infinite place of $\Q$. 
Rather interestingly, the two view-points split away regarding the asymptotic behaviors 
at finite places (i.e. primes) of $\Q$, and this makes the use of type II more convenient 
for particular purposes. In addition, sometimes we know type II approximations to a given set 
of functions, for which type I approximations are not known explicitly. Our approach can be 
regarded as a dual version of the standard linear independence criterion, which 
essentially goes back to Siegel. 
\end{abstract}

   \subjclass[2010]{Primary 11J72; Secondary 11B83, 11C20, 15B36, 33C45, 39A60, 41A28, 42C05}

\maketitle

\section{Introduction} 
The systematically creative search of new irrationality (in the broad sense that includes 
linear independence) criteria is a flowering topic in number theory; see \cite{Bedulev}, 
\cite{BundschuhTopfer}, \cite{ChantasiriCriteria}, \cite{ChantasiriCriteres}, 
\cite{Colmez}, \cite{Dauguet}, \cite{FischlerOscillate}, \cite{FischlerVectors}, 
\cite{FischlerHussainKristensenLevesley}, \cite{FischlerZudilin}, \cite{LaurentRoy}, 
\cite{NesterenkoNumbers}, \cite{NesterenkoPadic}, \cite{Philippon}, \cite{TopferUber}, 
\cite{TopferAxI}, \cite{TopferAxII}, \cite{ZudilinDeterminant}, among many others. Their 
reviews on MathSciNet and Zentralblatt MATH provide an historically and methodologically 
well informed context. The development of this research topic is stimulated by a rich amount 
of problems, each of which carries intrinsic geometric aspects that require slightly, or 
sometimes substantially, new approaches. We believe that a novelty, whether substantial or 
just formal, is connected to suitable aspects of the established theory that are reflected in 
the way it should be communicated and interpreted. This introduction, as well as the many 
remarks we have placed all along in this paper, suggest a reception of the text according 
to the described tradition-oriented spirit, and it is no surprise that geometry, possibly in 
the modern algebra-focused meaning of the word, is again at the core of the investigation. We 
recommend the notes of course written by M.Waldschmidt \cite{Waldschmidt}, for a very useful 
introduction to the seminal work of Hermite, Lindemann, Weierstrass, Pad\'e, Gel'fond, 
Schneider, Siegel, Mahler,... on this topic.
\subsection{Background}	
Suppose we have $\gamma_1,\dots,\gamma_m$ real numbers, and we wish to prove that the 
$m+1$ numbers $1,\gamma_1,\dots,\gamma_m$ are linearly independent over the field $\Q$ 
of all rational numbers. One efficient method, when successful, is to construct $m$ 
sequences of small linear forms satisfying the following properties ($\mu=1,\dots,m$):
\begin{itemize}
\item[{\it i})] $\varepsilon_n^{(\mu)} = q_n  \gamma_\mu - p_n^{(\mu)}\in \Z\gamma_\mu+\Z$;

\item[{\it ii})] $\varepsilon_n^{(\mu)}\to 0$ for $n\to\infty$;

\item[{\it iii})] for all $\boldsymbol{\lambda}\in\Z^m\setminus\{\mathbf{0}\}$ 
the inner product $\boldsymbol{\lambda} \cdot \boldsymbol{\varepsilon}_n$ is non-zero 
for infinitely many $n$. 
\end{itemize}
In a reasonably convenient situation, the assumption {\it ii}) is achieved by standard, 
although possibly difficult to apply, analytic methods, while the requirement in {\it iii}) 
looks elusive, and is indeed, at least sometimes, subtle to handle. 

More precisely, those analytic methods are applied to suitable linear 
forms $\widetilde{\varepsilon}_n^{\, (\mu)}=\widetilde{q}_n \gamma_\mu 
									- \widetilde{p}_n^{\, (\mu)}$ with 
{\it rational} coefficients, in such a way 
that $\varepsilon_n^{(\mu)} = D_n \widetilde{\varepsilon}_n^{\, (\mu)}$ and that 
the asymptotic behavior of the sequence of integers $D_n$ is controlled through a 
careful application of the Prime Number Theorem, using refined tools of different kinds, 
pioneered in papers like \cite{Rukhadze} and \cite{Dubickas}, on one hand, and 
\cite{RhinViola1996}, on the other hand; see \cite{Viola} for an historical perspective, 
and \cite{Marcovecchio2016} for a tentative unification of different approaches.

In some cases, one can replace {\it iii)} by both of
\begin{itemize}
\item[{\it iii.a)}] $1,\gamma_1,\dots,\gamma_{m-1}$ are linearly independent over $\Q$;

\item[{\it iii.b)}] for all $\boldsymbol{\lambda}\in\Z^{m-1}$, 
				$|\lambda_1 \varepsilon_n^{(1)} + \cdots 
				+ \lambda_{m-1} \varepsilon_n^{(m-1)}|< \varepsilon_n^{(m)}$ 
				for infinitely many $n$.
\end{itemize}
This is the strategy followed in 
\cite[Lemma 5.1]{ViolaZudilin} and \cite[(4-6)-(4-7)]{RhinViola2019}. The methods in 
\cite{RhinViola2019} can be, in principle, extended to the situation in 
\cite{Marcovecchio2016}, by playing the `wild card': the $\C^N$-saddle point method 
nicely developed in \cite{PinnaViola}.
\subsection{The new ingredient}
Let us switch to those awkward situations where {\it ii}) does not hold, 
in spite of our effort to arrange such a crucial requirement. Accordingly, 
we confine our goal to merely proving that
\begin{equation}		\label{Eq_Sec1.2}
\dim_\Q (\Q+\Q\gamma_1+\cdots+\Q\gamma_m) \geqslant m,
\end{equation}
say, which, at least, is less demanding. If, for instance, $m=3$, 
$\gamma_1={\rm Li}_1(\frac{1}{z})=-{\rm Li}_1(-\frac{1}{1-z})$, 
$\gamma_2={\rm Li}_2(\frac{1}{z})$, $\gamma_3={\rm Li}_2(-\frac{1}{1-z})$, then 
$2(\gamma_2+\gamma_3)=-\gamma_1^2$, and $\gamma_1$ is transcendental for all 
algebraic $z\not=0,1$. In several interesting cases \eqref{Eq_Sec1.2} is an open problem: 
the numbers $\gamma_i$ can be chosen to be values of polylogarithms at several 
points \cite{DavidHirata-KohnoKawashima}, or multiple polylogarithms \cite{Goncharov} 
\cite{FischlerRivoal_multiple}; or your favorite to-be-proved-linearly-independent `constants'. 

The key idea is to deal with \eqref{Eq_Sec1.2} when there is no available analytic 
construction of functional approximations to $m-1$ functions, that is to say: we are not 
able to identify a subspace of dimension $m$. We are about to suggest that a strategy 
to achieve \eqref{Eq_Sec1.2}, maybe for some other numbers unrelated to specific examples 
above, can still be quite well akin to {\it i})-{\it iii}). We should indeed construct 
(virtually: see 1.3 below) $2m$ sequences of linear forms such that: 
\begin{enumerate}
\item[{\it iv})] $\varepsilon_n^{(\mu,\nu)} = q_n^{(\nu)}\gamma_\mu - p_n^{(\mu,\nu)} \in 
					\Z\gamma_\mu+\Z$\quad ($\mu=1,\dots,m;v=1,2$); 

\item[{\it v})] $\Delta_n^{(\mu_1,\mu_2)} 
		= \varepsilon_n^{(\mu_1,1)} \varepsilon_n^{(\mu_2,2)} 
		- \varepsilon_n^{(\mu_1,2)} \varepsilon_n^{(\mu_2,1)} \to 0$ 
		for $n\to\infty$ ($\mu_1,\mu_2=1,\dots,m$);
		
\item[{\it vi})] for all $\boldsymbol{\lambda}\in\mathcal{M}(2,m;\Z)$ with rank $2$,  
the matrix $\boldsymbol{\lambda} \cdot \boldsymbol{\varepsilon}_n\in\mathcal{M}(2,2;\Z)$ 
is non-singular for infinitely many $n$.
\end{enumerate}
If suitably related sequences $\widetilde{\varepsilon}_n^{\, (\mu,\nu)}$ 
(see 2.5 below for more precision) are holonomic, in the sense of \cite{WilfZeilberger}, 
then the sequences of determinants in {\it v}) are holonomic as well, so that the linear 
recurrence equation satisfied by the determinants can be computed from the equation 
satisfied by their entries, and {\it v}) can be achieved through the 
Poincare-Perron-Pituk's theorems. Moreover, 
\begin{align*}
\Delta_n^{(\mu_1,\mu_2)}	 & = 
(q_n^{(1)}\gamma_{\mu_1} - p_n^{(\mu_1,1)})(q_n^{(2)}\gamma_{\mu_2} - p_n^{(\mu_2,2)}) -
(q_n^{(1)}\gamma_{\mu_2} - p_n^{(\mu_2,1)})(q_n^{(2)}\gamma_{\mu_1} - p_n^{(\mu_1,2)}) \\ 
& = (p_n^{(\mu_1,1)} p_n^{(\mu_2,2)} - p_n^{(\mu_2,1)} p_n^{(\mu_1,2)}) 
  + (p_n^{(\mu_2,1)} q_n^{(2)} - p_n^{(\mu_2,2)} q_n^{(1)}) \gamma_{\mu_1} 
  + (p_n^{(\mu_1,2)} q_n^{(1)} - p_n^{(\mu_1,1)} q_n^{(2)}) \gamma_{\mu_2}
\end{align*}
is just a linear form in $1,\gamma_{\mu_1},\gamma_{\mu_2}$; for this reason, the case 
$m=2$ simply resolves into an application of Hermite-Pad\'e approximations of type I. 
The simplest new case is therefore $m=3$. In the general situation below, we 
replace the lower bound $m$ for the dimension with the lower bound $m+2-l$ in 
\eqref{Eq_Sec1.2}, and consider $lm$ linear forms, instead of $m$ as 
in {\it i})-{\it iii}), or $2m$ as in {\it iv})-{\it vi}). We handle 
the main theorems in section 2,  with some variations on the same theme 
and a refinement on the main result that should incorporate most of potential 
applications.
\subsection{Auxiliary results between modern and neoclassical}
Section 3 is devoted to an auxiliary tool: in many concrete situations, the 
sequences $q_n,p_n^{(\mu)},\varepsilon_n^{(\mu)}$ in {\it i})-{\it iii}) satisfy 
the same Poincar\'e-Perron-Pituk-type linear recurrence equation of order $m+1$, see 
\cite[(2.1), (4.4) and (4.6)]{Marcovecchio2014} for examples with $m$ arbitrary. In 
such a special situation it is convenient to simply take
\[
q_n^{(\nu)}=q_{n+\nu-1}, \quad p_n^{(\mu,\nu)}=p_{n+\nu-1}^{(\mu)}
\]
in {\it iv})-{\it vi}), where $q_n$ and $p_n^{(\mu)}$ are the sequences 
in {\it i})-{\it iii}). To achieve our program, one possible strategy is to find 
a recurrence equation satisfied by the minors involved. In the special case $m=3$ 
and $l=2$, this new recurrence equation has order $3$, and, roughly, is (up to a suitable 
normalization) the adjoint of the recurrence equation for $\varepsilon_n^{(\mu)}$. 
For general $1\leqslant l\leqslant m$, the linear recurrence equation for the 
determinants $\Delta_n^{(\mu_1,\dots,\mu_l)}$ similar to $\Delta_n^{(\mu_1,\mu_2)}$ 
in {\it vii}) has order $\binom{m}{l}$. However, as we shall show, there is a better 
way to achieve the main goal of this program, which is to find a technical tool that 
helps us to deal with {\it v}) and {\it vi}) above, and their generalizations to lower 
dimensions $m+2-l$ in \eqref{Eq_Sec1.2}. We find very precisely the asymptotic behavior 
of the minors $\Delta_n^{(\mu_1,\dots,\mu_l)}$ in quite a general setting, which should 
suffice for applications. Most of the material in this section is well-known. We 
apply a Golden Oldie, the Sylvester-Franke Theorem, to the compound matrix of the 
Casoratian matrix of a difference equation, and combine that with a general result 
by Pituk on the asymptotic behavior of solutions of a Poincar\'e-Perron-type difference 
system. 

A completely different problem is to {\it find} a recurrence equation satisfied by 
$q_n,p_n^{(\mu)},\varepsilon_n^{(\mu)}$ (of course, if it exists in the first place), 
and how to determine the roots of the characteristic polynomial of that equation. This 
can be achieved in different ways, see e.g. \cite{Marcovecchio2014} and \cite{ZudilinZeta2}.

There is a small inconsistency between the notations in Sect. 2 and Sect. 3. When we 
consider a recurrence of order $m$ in Sect. 3, we aim at applications to the linear 
independence of $m$ numbers $1,\gamma_1,\dots,\gamma_{m-1}$ (not of $m+1$ numbers), 
in the sense of a lower bound for the dimension, as explained above. We hope that this 
will not trouble the reader, and we anticipate that the expositions in the two sections 
are quit autonomous from each other.
\subsection{Two examples to begin with}
In section 4 we sketch a possible application of our criterion. We give two examples to 
illustrate the results from Sect.2 and from Sect.3 separately. We do so building, in both 
cases, on the clever construction introduced in \cite{DavidHirata-KohnoKawashima}. 
In principle, the results from Sect.2 and Sect.3 could be combined together in the 
second example, but the implementation seems to be difficult. 

The simplest example we can produce involves five numbers, 
namely $1,{\rm Li}_1(\frac{1}{q}),{\rm Li}_2(\frac{1}{q}),
	{\rm Li}_1(\frac{2}{q})$, ${\rm Li}_2(\frac{2}{q})$. 
It is still open to find an example involving only four numbers. Our second example is with 
the $km+1$ numbers $1,{\rm Li}_1(\frac{1}{q}),\dots,{\rm Li}_k(\frac{1}{q}),\dots,
	{\rm Li}_1(\frac{1}{mq}),\dots,{\rm Li}_k(\frac{1}{mq})$.

We hope that further, and perhaps more interesting, applications will be provided 
in some future. 
\section{Main results}
Throughout this paper, we use the following abbreviations 
for a matrix with $u$ rows and $v$ columns:
\[
\boldsymbol{\rho} = \Big[ \rho^{(i,j)} \Big]_{\substack{i=1,\dots,u \\ j=1,\dots,v}} = 
\left[
\begin{matrix}	
\rho^{(1,1)} & \cdots & \rho^{(1,v)} \\
\vdots		& \ddots & \vdots \\
\rho^{(u,1)} & \cdots  & \rho^{(u,v)} 
\end{matrix}
\right].
\]
We often deal with a sub-matrix of $\boldsymbol{\rho}$:
\[
\boldsymbol{\rho}^{(\boldsymbol{\mu},\boldsymbol{\nu})} = 
\Big[ \rho^{(\mu_i,\nu_j)} \Big]_{\substack{i=1,\dots,k \\ j=1,\dots,l}} = 
\left[
\begin{matrix}	
\rho^{(\mu_1,\nu_1)} & \cdots & \rho^{(\mu_1,\nu_l)} \\
\vdots		& \ddots & \vdots \\
\rho^{(\mu_k,\nu_1)} & \cdots  & \rho^{(\mu_k,\nu_l)} 
\end{matrix}
\right].
\]
Sometimes we replace $\boldsymbol{\mu}$ (resp.: $\boldsymbol{\nu}$) with $k$ 
(resp.: $l$) when we select the first $k$ rows (resp.: the first $l$ columns). 
A sub-matrix of $\boldsymbol{\rho}$ with $u$ rows (resp.: with $v$ columns) 
is denoted by $\boldsymbol{\rho}^{(\textbf{-},\boldsymbol{\nu})}$ (resp.: 
by $\boldsymbol{\rho}^{(\boldsymbol{\mu},\textbf{-})}$), while the dash is used, 
after the square brackets and in place of the range for the rows (or the columns), when 
the concerned matrix has one row (or one column). We write $\boldsymbol{I}^{(u,u)}$ for 
the identity matrix, and $\boldsymbol{0}^{(u,v)}$ for the zero matrix. We denote 
by ${}^t\! \boldsymbol{\rho}$ the transpose of $\boldsymbol{\rho}$. We also 
consider matrices divided by blocks, for 
example $\boldsymbol\rho = [\boldsymbol\rho^{(\textbf{-},(1,\dots,k))}\;|\; 
\boldsymbol\rho^{(\textbf{-},(k+1,\dots,v))}]$.
\subsection{The main criterion}
Let $\gamma_1,\dots,\gamma_m$ be real numbers, and let $l\in\{1,\dots,m\}$. 
Let also $q_n^{(\nu)},p_n^{(1,\nu)},\dots,p_n^{(m,\nu)}$, with $\nu=1,\dots,l$, 
be $l(m+1)$ sequences of integers (hereafter: elements of $\Z$), and put
\begin{equation}		\label{Eq_Sec2.1qgammap}
\varepsilon_n^{(\mu,\nu)}:= q_n^{(\nu)} \gamma_\mu - p_n^{(\mu,\nu)} \quad 
														(\mu=1,\dots,m;\nu=1,\dots,l).
\end{equation}
\begin{theorem}
Suppose that for all choices of $l$ (distinct) 
indices $\boldsymbol{\mu}=(\mu_1,\dots,\mu_l)$ 
from $1$ to $m$,
\begin{equation}		\label{Eq_Sec2.1detto0}
\det \boldsymbol{\varepsilon}_n^{(\boldsymbol{\mu},\textbf{-} )} = 
\det \Big[\varepsilon_n^{(\mu,\nu)}\Big]_{\substack{\mu=\mu_1,\dots,\mu_l \\ 
											\nu=1,\dots,l} } \to 0	\quad (n\to\infty).
\end{equation}
Furthermore, suppose that for all $\lambda^{(i,j)}\in\Z$ ($i=1,\dots,l;j=1,\dots,m$) 
such that the matrix
\[
\boldsymbol{\lambda} = 
\Big[\lambda^{(i,j)}\Big]_{\substack{i=1,\dots,l \\ j=1,\dots,m}}
\]
has rank $l$, the square matrix
\[
\boldsymbol{\lambda} \boldsymbol{\varepsilon}_n =  
\Big[\;\cdot\; {}_n^{(i,\nu)}\Big]_{\substack{i=1,\dots,l \\ \nu=1,\dots,l}} = 
\Big[\lambda^{(i,j)}\Big]_{\substack{i=1,\dots,l \\ j=1,\dots,m}} 
\Big[\varepsilon_n^{(\mu,\nu)}\Big]_{\substack{\mu=1,\dots,m \\ \nu=1,\dots,l} }
\]
is non-singular for infinitely many $n$. 

Then
\begin{equation}		\label{Eq_Sec2.1dimQ}
\dim_\Q ( \Q + \Q\gamma_1 + \cdots + \Q\gamma_m) \geqslant 2+m-l.
\end{equation}
\end{theorem}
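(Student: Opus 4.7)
The plan is to argue by contradiction, following the classical Siegel-type dualization scheme, and to reduce everything to the observation that an integer sequence that tends to $0$ must eventually vanish.

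Assume for contradiction that $\dim_\Q(\Q+\Q\gamma_1+\cdots+\Q\gamma_m)\leqslant m+1-l$. Then the $\Q$-subspace of $\Q^{m+1}$ of relations $(a_0,a_1,\dots,a_m)$ with $a_0+a_1\gamma_1+\cdots+a_m\gamma_m=0$ has dimension at least $l$. Choosing a $\Z$-basis of $l$ such relations, say $(a_0^{(i)},a_1^{(i)},\dots,a_m^{(i)})\in\Z^{m+1}$ for $i=1,\dots,l$, I would form the integer matrix $\boldsymbol{\lambda}=[a_j^{(i)}]_{i=1,\dots,l;\,j=1,\dots,m}$ obtained by dropping the zeroth column. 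The first key point to verify is that $\boldsymbol{\lambda}$ has rank exactly $l$: if its rank were $l-1$, a nontrivial $\Q$-combination of the rows would vanish in coordinates $1,\dots,m$, yet the corresponding combination of the relations would read $\sum_i c_i a_0^{(i)}=0$, contradicting $\Q$-linear independence of the chosen relations in $\Q^{m+1}$.

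Next, having $\boldsymbol{\lambda}$ of rank $l$, the third hypothesis of the theorem guarantees that the $l\times l$ matrix $\boldsymbol{\lambda}\boldsymbol{\varepsilon}_n$ is non-singular for infinitely many $n$. The second key observation is that $\boldsymbol{\lambda}\boldsymbol{\varepsilon}_n$ actually lies in $\mathcal{M}(l,l;\Z)$: its $(i,\nu)$-entry equals
\[
\sum_{\mu=1}^m a_\mu^{(i)}\bigl(q_n^{(\nu)}\gamma_\mu-p_n^{(\mu,\nu)}\bigr)
=-a_0^{(i)}\,q_n^{(\nu)}-\sum_{\mu=1}^m a_\mu^{(i)}\,p_n^{(\mu,\nu)}\in\Z,
\]
where the relation $\sum_\mu a_\mu^{(i)}\gamma_\mu=-a_0^{(i)}$ has been used to eliminate the $\gamma_\mu$'s. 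Consequently, whenever $\boldsymbol{\lambda}\boldsymbol{\varepsilon}_n$ is non-singular, its determinant is a nonzero integer, hence $|\det(\boldsymbol{\lambda}\boldsymbol{\varepsilon}_n)|\geqslant 1$.

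I would then contradict this lower bound via the Cauchy--Binet formula:
\[
\det(\boldsymbol{\lambda}\boldsymbol{\varepsilon}_n)
=\sum_{\boldsymbol{\mu}}\det\boldsymbol{\lambda}^{(\textbf{-},\boldsymbol{\mu})}\;\det\boldsymbol{\varepsilon}_n^{(\boldsymbol{\mu},\textbf{-})},
\]
where the sum runs over the $\binom{m}{l}$ strictly increasing $l$-tuples $\boldsymbol{\mu}$ in $\{1,\dots,m\}$. The coefficients $\det\boldsymbol{\lambda}^{(\textbf{-},\boldsymbol{\mu})}$ are fixed integers depending only on the chosen relations, while hypothesis \eqref{Eq_Sec2.1detto0} asserts that each $\det\boldsymbol{\varepsilon}_n^{(\boldsymbol{\mu},\textbf{-})}\to 0$. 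Hence $\det(\boldsymbol{\lambda}\boldsymbol{\varepsilon}_n)\to 0$ as $n\to\infty$, forcing $\det(\boldsymbol{\lambda}\boldsymbol{\varepsilon}_n)=0$ for all sufficiently large $n$, which contradicts the non-singularity for infinitely many $n$. The conclusion \eqref{Eq_Sec2.1dimQ} follows.

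The only delicate step is the rank argument in the first paragraph; everything else is a dual reformulation of the standard Hermite--Pad\'e linear independence criterion, with Cauchy--Binet replacing the evaluation of a single linear form by the evaluation of a determinant. There is no analytic difficulty, but it is important to notice that the rank-$l$ requirement on $\boldsymbol\lambda$ dictates exactly which integer matrices the hypothesis must cover, and that this matches the vanishing of all $l\times l$ minors of $\boldsymbol{\varepsilon}_n$ demanded by \eqref{Eq_Sec2.1detto0}.
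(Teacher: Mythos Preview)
Your argument is correct and matches the paper's proof essentially step for step: both argue by contradiction, pick $l$ independent integer relations, use them as the matrix $\boldsymbol{\lambda}$ (the paper's $\boldsymbol{\omega}$), observe that $\boldsymbol{\lambda}\boldsymbol{\varepsilon}_n$ has integer entries via the relation $\sum_\mu a_\mu^{(i)}\gamma_\mu=-a_0^{(i)}$, and then combine the non-singularity hypothesis with Cauchy--Binet to reach a contradiction. The only difference is that you supply a short justification for why dropping the zeroth column preserves rank $l$, whereas the paper simply asserts that $\boldsymbol{\omega}$ can be taken of rank $l$.
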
 
\begin{proof}
Arguing by contradiction, thus allowing 
\[
\dim_\Q ( \Q + \Q\gamma_1 + \cdots + \Q\gamma_m) \leqslant 1+m-l,
\]
let $\varpi^{(i)},\omega^{(i,j)}\in\Z$ be such that 
\[
\varpi^{(i)}+\omega^{(i,1)}\gamma_1+\cdots+\omega^{(i,m)}\gamma_m = 0 \quad (i=1,\dots,l),
\]
i.e. $\boldsymbol{\varpi} + \boldsymbol{\omega} \boldsymbol{\gamma}= {\bf 0}$,
and that $[\boldsymbol{\varpi} | \boldsymbol{\omega}]$ and $\boldsymbol{\omega}$ 
have rank $l$. We shorten \eqref{Eq_Sec2.1qgammap} by $\boldsymbol{\varepsilon}_n = 
\boldsymbol{\gamma} \boldsymbol{q}_n - \boldsymbol{p}_n$, thus obtaining 
\[
\boldsymbol{\omega} \boldsymbol{\varepsilon}_n = 
\boldsymbol{\omega}\boldsymbol{\gamma} \boldsymbol{q}_n 
	- \boldsymbol{\omega} \boldsymbol{p}_n = 
	- (\boldsymbol{\varpi} \boldsymbol{q}_n + \boldsymbol{\omega} \boldsymbol{p}_n) 
	= - \left[ \boldsymbol{\varpi}\; |\; \boldsymbol{\omega} \right]\ 
	\left[ \begin{matrix} \boldsymbol{q}_n \\ - \\ \boldsymbol{p}_n \end{matrix} \right].
\] 
By our assumptions, this must be a non-singular matrix, so that its determinant 
has to be a non-zero integer, for infinitely many $n$. On the other hand, by the  
Binet-Cauchy formula for the determinant of a product of two matrices,
\[
\det \boldsymbol{\omega} \boldsymbol{\varepsilon}_n = 
\sum_{\boldsymbol{\mu}} \det \boldsymbol{\omega}^{( \textbf{-}, \boldsymbol{\mu})}
						\det \boldsymbol{\varepsilon}_n^{(\boldsymbol{\mu},\textbf{-} )},
\] 
where the sum is over all multi-indices ${\boldsymbol\mu}$ such that $1\leqslant
\mu_1<\cdots<\mu_l\leqslant m$. Therefore, using \eqref{Eq_Sec2.1detto0},
$ \det \boldsymbol{\omega} \boldsymbol{\varepsilon}_n \to 0$ as $n\to\infty$. This 
contradiction ends the proof of \eqref{Eq_Sec2.1dimQ}.
\end{proof}
\begin{remark}
By the Binet-Cauchy formula, the assumption \eqref{Eq_Sec2.1detto0} is equivalent to
\[
\det {}^t \boldsymbol{\varepsilon}_n \boldsymbol{\varepsilon}_n 
= \sum_{\boldsymbol{\mu}} 
		\det\! {}^2\, \boldsymbol{\varepsilon}_n^{(\boldsymbol{\mu},\textbf{-} )} 
		\to 0 \quad (n\to\infty),
\]
with an interesting interpretation of the determinant as the square of the
$l$-dimensional volume of the parallelotope generated by the $l$ columns of 
$\boldsymbol{\varepsilon}_n$ in $\R^m$. 

Also, the validity of the non-vanishing assumption is checked more easily 
if we have a prior partial information on the linear independence of some 
numbers among $1,\gamma_1,\dots,\gamma_m$, in analogy to the situation outlined 
in the introduction.
\end{remark}
\begin{remark}
It's worth noticing that each $\det \boldsymbol{\varepsilon}_n^{(\boldsymbol{\mu},
\textbf{-} )}$ is linear combination of $1,\gamma_{\mu_1},\dots,\gamma_{\mu_l}$, because
\[
\det \boldsymbol{\varepsilon}_n^{(\boldsymbol{\mu},\textbf{-} )} = 
\det \left[\boldsymbol{\gamma q}_n - \boldsymbol p_n\right]^{(\boldsymbol{\mu},\textbf{-} )}
= \det  
\left[ \begin{matrix}  
	\boldsymbol{\gamma} & | & \boldsymbol{p}_n \\ - &  & - \\ 1 & | & \boldsymbol{q}_n  						\end{matrix} \right]^{(\boldsymbol{\widehat{\mu}},\textbf{-} )},
\]
where $\boldsymbol{\widehat{\mu}}=(\mu_1,\dots,\mu_l,l+1)$, as is easily seen from
\[
\left[ \begin{matrix}  \boldsymbol{\gamma q}_n - \boldsymbol{p}_n & | 
				& \boldsymbol{\gamma} \\ - &  & - \\ 
					\boldsymbol{0}^{(\textbf{-},l)} & | &  1	\end{matrix} \right] =
\left[ \begin{matrix}  \boldsymbol{\gamma} & | & \boldsymbol{p}_n \\ - &  & - \\ 
					1 & | & \boldsymbol{q}_n  	\end{matrix} \right]
\left[ \begin{matrix} \boldsymbol{q}_n & | & 1 \\ - &  & - \\ 
				 - \boldsymbol{I}^{(l,l)} & | & \boldsymbol{0}^{(l,\textbf{-})}  \end{matrix} \right].
\] 
This generalizes an observation we made in Sect.~1.2 for the case $l=2$.
\end{remark}
\subsection{First variation}
By repeating the same proof as in Theorem 2.1, we have
\begin{theorem}
Suppose that for all choices of $l$ (distinct) 
indices $\boldsymbol{\mu}=(\mu_1,\dots,\mu_l)$ 
from $1$ to $m$,
\[
\det \boldsymbol{\varepsilon}_n^{(\boldsymbol{\mu}, \textbf{-} )} = 
\det \Big[\varepsilon_n^{(\mu,\nu)}\Big]_{\substack{\mu=\mu_1,\dots,\mu_l \\ 
											\nu=1,\dots,l} } \to 0	\quad (n\to\infty).
\]
Furthermore, suppose that 
for all $\theta^{(i)},\lambda^{(i,j)}\in\Z$ ($i=1,\dots,l;j=1,\dots,m$) 
such that the matrix
\[
\left[\boldsymbol{\theta}\; |\; \boldsymbol{\lambda} \right] = 
\left[ \left[ \theta^{(i)}\right]_{\substack{i=1,\dots,l \\ - }} \ | \ 
\left[ \lambda^{(i,j)}\right]_{\substack{i=1,\dots,l \\ j=1,\dots,m}} \right]
\]
has rank $l$, the square matrix
\begin{align*}
[\boldsymbol{\theta}\; |\; \boldsymbol{\lambda} ] 
\left[ \begin{matrix} \boldsymbol{q}_n \\ - \\ \boldsymbol{p}_n \end{matrix} \right] & =
\boldsymbol{\theta} \boldsymbol{q}_n + \boldsymbol{\lambda} \boldsymbol{p}_n \\ & = 
\Big[\theta^{(i)}\Big]_{\substack{i=1,\dots,l \\ - }} 
\Big[q_n^{(\nu)}\Big]_{\substack{ - \\ \nu=1,\dots,l} } 
+
\Big[\lambda^{(i,j)}\Big]_{\substack{i=1,\dots,l \\ j=1,\dots,m }} 
\Big[p_n^{(\mu,\nu)}\Big]_{\substack{ \mu=1,\dots,m \\ \nu=1,\dots,l} }  
\end{align*}
is non-singular for infinitely many $n$. 

Then
\[
\dim_\Q ( \Q + \Q\gamma_1 + \cdots + \Q\gamma_m) \geqslant 2+m-l. 
\]
\end{theorem}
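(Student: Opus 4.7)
The plan is to repeat, essentially verbatim, the argument given for Theorem 2.1, the only adjustment being a change of algebraic parametrization: what Theorem 2.1 phrases as the non-singularity of $\boldsymbol{\lambda}\boldsymbol{\varepsilon}_n$ is here phrased as the non-singularity of $\boldsymbol{\theta}\boldsymbol{q}_n+\boldsymbol{\lambda}\boldsymbol{p}_n$, and these two viewpoints are linked by the identity
\[
\boldsymbol{\omega}\boldsymbol{\varepsilon}_n \;=\; \boldsymbol{\omega}\boldsymbol{\gamma}\,\boldsymbol{q}_n-\boldsymbol{\omega}\boldsymbol{p}_n \;=\; -\bigl(\boldsymbol{\varpi}\,\boldsymbol{q}_n+\boldsymbol{\omega}\,\boldsymbol{p}_n\bigr)
\]
already displayed in the proof of Theorem 2.1. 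I would argue by contradiction. If $\dim_\Q(\Q+\Q\gamma_1+\cdots+\Q\gamma_m)\leqslant 1+m-l$, the $\Q$-vector space of integer relations $\varpi+\omega^{(1)}\gamma_1+\cdots+\omega^{(m)}\gamma_m=0$ has dimension at least $l$, so after clearing denominators I can pick $l$ independent such relations and package them into integer data $\boldsymbol{\varpi}\in\Z^l$, $\boldsymbol{\omega}\in\mathcal{M}(l,m;\Z)$ with $\boldsymbol{\varpi}+\boldsymbol{\omega}\boldsymbol{\gamma}=\mathbf{0}$ and with $[\boldsymbol{\varpi}\,|\,\boldsymbol{\omega}]$ of rank $l$.

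Next, I apply the standing hypothesis of the theorem to the specific choice $\boldsymbol{\theta}=\boldsymbol{\varpi}$ and $\boldsymbol{\lambda}=\boldsymbol{\omega}$. The augmented matrix $[\boldsymbol{\theta}\,|\,\boldsymbol{\lambda}]=[\boldsymbol{\varpi}\,|\,\boldsymbol{\omega}]$ has rank $l$ by construction, so $\boldsymbol{\theta}\boldsymbol{q}_n+\boldsymbol{\lambda}\boldsymbol{p}_n$ is non-singular for infinitely many $n$, and its determinant is therefore a non-zero rational integer for each such $n$. On the other hand, the displayed identity gives $\boldsymbol{\theta}\boldsymbol{q}_n+\boldsymbol{\lambda}\boldsymbol{p}_n = -\boldsymbol{\omega}\boldsymbol{\varepsilon}_n$, and the Binet-Cauchy expansion
\[
\det\!\bigl(\boldsymbol{\omega}\boldsymbol{\varepsilon}_n\bigr)\;=\;\sum_{\boldsymbol{\mu}}\det\boldsymbol{\omega}^{(\textbf{-},\boldsymbol{\mu})}\,\det\boldsymbol{\varepsilon}_n^{(\boldsymbol{\mu},\textbf{-})}
\]
combined with the assumption that every $\det\boldsymbol{\varepsilon}_n^{(\boldsymbol{\mu},\textbf{-})}$ tends to $0$ forces this determinant to tend to $0$. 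A non-zero integer cannot tend to $0$, contradiction, and the lower bound $2+m-l$ for the dimension follows.

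I do not expect any serious obstacle; the only point that requires a moment of care is producing the required integer data $\boldsymbol{\varpi},\boldsymbol{\omega}$ from the dimension hypothesis with the appropriate rank condition. Here the projection $(\boldsymbol{\varpi},\boldsymbol{\omega})\mapsto(\boldsymbol{\varpi},\boldsymbol{\omega})$ itself has image of dimension $\geqslant l$ in $\Q^{1+m}$, so $[\boldsymbol{\varpi}\,|\,\boldsymbol{\omega}]$ has rank $l$ automatically. Once this observation is in place, the proof amounts to a one-line reinterpretation of the Theorem 2.1 argument, consistent with the author's remark that the same proof repeats; the usefulness of the present reformulation, I would guess, lies elsewhere, namely in applications where verifying non-singularity of combinations of the integer vectors $\boldsymbol{q}_n$ and $\boldsymbol{p}_n$ is more convenient than verifying it for the linear forms $\boldsymbol{\varepsilon}_n$ themselves.
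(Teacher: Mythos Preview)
Your argument is correct and matches the paper's approach exactly: the paper itself gives no separate proof, merely noting that Theorem~2.2 follows by repeating the proof of Theorem~2.1, and you have carried this out faithfully via the identity $\boldsymbol{\omega}\boldsymbol{\varepsilon}_n=-(\boldsymbol{\varpi}\boldsymbol{q}_n+\boldsymbol{\omega}\boldsymbol{p}_n)$ together with Binet--Cauchy. The only cosmetic oddity is your phrase ``the projection $(\boldsymbol{\varpi},\boldsymbol{\omega})\mapsto(\boldsymbol{\varpi},\boldsymbol{\omega})$'', which is just the identity; the intended point---that $l$ independent relations in $\Q^{1+m}$ yield $[\boldsymbol{\varpi}\,|\,\boldsymbol{\omega}]$ of rank $l$---is clear and correct.
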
 
\subsection{Second variation} 
It may be appropriate to have alternative versions of the above criterion, for use in 
different situations. Therefore, let us change our setting a little bit. We still 
have $m$ real numbers $\gamma_1,\dots,\gamma_m$ and $1\leqslant l\leqslant m$. Now, 
let $q_n^{(\nu)};p_n^{(1,\nu)},\dots,p_n^{(m,\nu)}$ ($\nu=0,\dots,m$) be $(m+1)^2$  
sequences in $\Z$. Let us extend the notation in \eqref{Eq_Sec2.1qgammap} accordingly:
\[
\varepsilon_n^{(\mu,\nu)}:= q_n^{(\nu)} \gamma_\mu - p_n^{(\mu,\nu)} \quad 
														(\mu=1,\dots,m;\nu=0,\dots,m).
\]
\begin{theorem}
Suppose that for all choices of (distinct) $l$ 
indices $\boldsymbol{\mu}=(\mu_1,\dots,\mu_l)$ from $1$ to $m$ 
and $\boldsymbol{\nu}=(\nu_1,\dots,\nu_l)$ from $0$ to $m$,
\[
\det \boldsymbol{\varepsilon}_n^{(\boldsymbol{\mu},\boldsymbol{\nu})}  
		\to 0	\quad (n\to\infty).
\]
Furthermore, suppose that 
\[
\det \left[ \begin{matrix} \boldsymbol{q}_n \\ - \\ \boldsymbol{p}_n \end{matrix} \right] 
 \not = 0 \quad (n=0,1,2,\dots) .
\]
Then
\[
\dim_\Q ( \Q + \Q\gamma_1 + \cdots + \Q\gamma_m) \geqslant 2+m-l.
\]
\end{theorem}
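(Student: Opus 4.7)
My plan is to repeat the proof of Theorem 2.1 almost verbatim, but with one crucial adjustment: whereas in Theorem 2.1 the non-singularity hypothesis directly produced a non-zero $l\times l$ integer determinant, in the present setting it is the square matrix $\left[\begin{matrix}\boldsymbol{q}_n \\ - \\ \boldsymbol{p}_n\end{matrix}\right]$ that is assumed invertible, and I will have to extract a non-zero $l\times l$ integer minor from a larger \emph{rectangular} matrix, then pass to a subsequence where the same column selection keeps working.

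Arguing by contradiction, I would assume $\dim_\Q(\Q+\Q\gamma_1+\cdots+\Q\gamma_m) \leqslant 1+m-l$. Then, exactly as in the proof of Theorem 2.1, I can find integer matrices $\boldsymbol{\varpi}$ (size $l \times 1$) and $\boldsymbol{\omega}$ (size $l \times m$) with $[\boldsymbol{\varpi}\,|\,\boldsymbol{\omega}]$ of rank $l$ and $\boldsymbol{\varpi}+\boldsymbol{\omega}\boldsymbol{\gamma}=\mathbf{0}$. The same algebraic identity as before then gives
\[
\boldsymbol{\omega}\boldsymbol{\varepsilon}_n = -[\boldsymbol{\varpi}\,|\,\boldsymbol{\omega}]\left[\begin{matrix}\boldsymbol{q}_n \\ - \\ \boldsymbol{p}_n\end{matrix}\right],
\]
now an $l\times(m+1)$ matrix with integer entries. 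Here the second hypothesis enters: since the $(m+1)\times(m+1)$ matrix on the right is invertible for every $n$, the product $\boldsymbol{\omega}\boldsymbol{\varepsilon}_n$ has rank $l$ for every $n$. Hence, for each $n$, at least one of the $\binom{m+1}{l}$ possible $l$-tuples $\boldsymbol{\nu}_n$ drawn from $\{0,1,\dots,m\}$ yields a non-vanishing $l\times l$ minor $\det(\boldsymbol{\omega}\boldsymbol{\varepsilon}_n)^{(\textbf{-},\boldsymbol{\nu}_n)}\in\Z\setminus\{0\}$.

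Since the range of possible $\boldsymbol{\nu}_n$ is finite, pigeonhole supplies an infinite subsequence of $n$'s along which $\boldsymbol{\nu}_n=\boldsymbol{\nu}$ is constant, and along it $|\det(\boldsymbol{\omega}\boldsymbol{\varepsilon}_n)^{(\textbf{-},\boldsymbol{\nu})}|\geqslant 1$. On the other hand, Binet--Cauchy applied to the product $\boldsymbol{\omega}\cdot\boldsymbol{\varepsilon}_n^{(\textbf{-},\boldsymbol{\nu})}$ gives
\[
\det(\boldsymbol{\omega}\boldsymbol{\varepsilon}_n)^{(\textbf{-},\boldsymbol{\nu})} = \sum_{\boldsymbol{\mu}} \det\boldsymbol{\omega}^{(\textbf{-},\boldsymbol{\mu})}\,\det\boldsymbol{\varepsilon}_n^{(\boldsymbol{\mu},\boldsymbol{\nu})},
\]
with $\boldsymbol{\mu}$ running over the $l$-subsets of $\{1,\dots,m\}$, and the first hypothesis forces the right-hand side to tend to $0$. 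This contradicts the lower bound of $1$ along the subsequence and completes the argument. The only genuine subtlety compared to Theorem 2.1 is the pigeonhole step that freezes a single column selection $\boldsymbol{\nu}$, since rank information alone does not pin down a specific minor; everything else is straightforward bookkeeping in the matrix notation of Sect.~2.
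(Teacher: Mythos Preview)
Your argument is correct and follows the paper's own proof essentially verbatim: contradiction via $l$ independent integer relations, the identity $\boldsymbol{\omega}\boldsymbol{\varepsilon}_n=-[\boldsymbol{\varpi}\,|\,\boldsymbol{\omega}]\bigl[\begin{smallmatrix}\boldsymbol{q}_n\\\boldsymbol{p}_n\end{smallmatrix}\bigr]$ yielding an $l\times(m+1)$ integer matrix of rank $l$, pigeonhole on the finitely many column selections $\boldsymbol{\nu}$ to fix one that gives a non-zero integer minor infinitely often, and then Binet--Cauchy to force that minor to $0$. The only cosmetic difference is that the paper phrases the pigeonhole step in one line rather than first choosing $\boldsymbol{\nu}_n$ and then freezing it.
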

\begin{proof}
As above, we argue by contradiction, and, like in the previous proof, we have
\[
\boldsymbol{\omega} \boldsymbol{\varepsilon}_n = 
\boldsymbol{\omega}\boldsymbol{\gamma} \boldsymbol{q}_n 
	- \boldsymbol{\omega} \boldsymbol{p}_n = 
	- (\boldsymbol{\varpi} \boldsymbol{q}_n + \boldsymbol{\omega} \boldsymbol{p}_n) = 
	- [ \boldsymbol{\varpi}\; |\; \boldsymbol{\omega} ]\ 
	\left[ \begin{matrix} \boldsymbol{q}_n \\ - \\ \boldsymbol{p}_n \end{matrix} \right],
\]
but now $\boldsymbol{\omega} \boldsymbol{\varepsilon}_n$ is a matrix with $l$ rows 
and $m+1$ columns. Since $[ \boldsymbol{\varpi} | \boldsymbol{\omega} ]$ has rank $l$ 
and $[ {}^t \boldsymbol{q}_n | {}^t \boldsymbol{p}_n ]$ is non-singular, 
their product, which is $\boldsymbol{\omega} \boldsymbol{\varepsilon}_n$, has rank $l$. 
By the pigeonhole principle, there exists a square sub-matrix $\boldsymbol{\omega} 
			\boldsymbol{\varepsilon}_n^{(\textbf{-}, \boldsymbol{\nu})}$, 
with $\boldsymbol{\nu}=(\nu_1,\dots,\nu_l)$ and $0\leq \nu_1<\cdots<\nu_l\leq m$, 
that is non-singular for infinitely many $n$. For that sub-matrix we have
$\boldsymbol{\omega} \boldsymbol{\varepsilon}_n^{( \textbf{-},\boldsymbol{\nu})} = 
		- (\boldsymbol{\varpi} \boldsymbol{q}_n^{( \textbf{-}, \boldsymbol{\nu})} 
		+ \boldsymbol{\omega} \boldsymbol{p}_n^{( \textbf{-}, \boldsymbol{\nu})})$,
so that the determinant must be a non-zero integer for infinitely many $n$. 
For concluding the proof we apply the Binet-Cauchy formula, to obtain
\[
\det \boldsymbol{\omega} \boldsymbol{\varepsilon}_n^{(\textbf{-},\boldsymbol{\nu})} = 
\sum_{\boldsymbol{\mu}} 
			\det \boldsymbol{\omega}^{(\textbf{-}, \boldsymbol{\mu})}
			\det \boldsymbol{\varepsilon}_n^{(\boldsymbol{\mu}, \boldsymbol{\nu}) } 
			\to 0 \quad (n\to\infty). \qedhere
\] 
\end{proof}
\begin{remark}
The last theorem is designed to be used when $q_n^{(\nu)},p_n^{(\mu,\nu)}$ are 
specializations of a system of type II Hermite-Pad\'e approximations to $m$ 
functions. In this case, indeed, the non-vanishing of the determinant follows, 
more or less routinely, by analytic properties that characterize the polynomials 
involved. 
\end{remark}
\subsection{A refinement}
The above criteria can be refined following an idea I learned from F.Amoroso \cite{Amoroso},   
see also \cite{Colmez} and Remark 2.6 below. A trickier use of this idea lead in 
\cite{FischlerZudilin} to a refinement of Nesterenko's criterion with very interesting 
applications to the linear independence of zeta values and related numbers. As in 
\eqref{Eq_Sec2.1qgammap}, let 
\[
\varepsilon_n^{(\mu,\nu)}:= q_n^{(\nu)} \gamma_\mu - p_n^{(\mu,\nu)} \quad 
														(\mu=1,\dots,m;\nu=1,\dots,l),
\]
but here $q_n^{(\nu)}, p_n^{(\mu,\nu)}$ are rational numbers. 
Let $D_n^{(1)},\dots,D_n^{(m)},\delta_n^{(1)},\dots,\delta_n^{(l)}$ be 
positive integers, and suppose that
\begin{equation}			\label{Eq_Sec2.4refinement}
\frac{q_n^{(\nu)}}{\delta_n^{(\nu)}}\in\Z, \; 
\frac{D_n^{(\mu_2)}}{\delta_n^{(\nu)}}\, p_n^{(\mu_1,\nu)}\in\Z \quad 
							(1\leqslant\mu_1\leqslant\mu_2\leqslant m;\nu=1,\dots,l).
\end{equation}
We have the following
\begin{theorem}[Refinement of Theorem 2.1]
Besides the assumptions above, suppose that for all choices of $l$ (distinct) 
indices $\boldsymbol{\mu}=(\mu_1,\dots,\mu_l)$ from $1$ to $m$,
\[
\frac{D_n^{(m)}}{\delta_n^{(1)}}\cdots\frac{D_n^{(m-l+1)}}{\delta_n^{(l)}}\,
\det \boldsymbol{\varepsilon}_n^{(\boldsymbol{\mu},\textbf{-} )} \to 0 \quad (n\to\infty).
\]
Furthermore, suppose that for all $\lambda^{(i,j)}\in\Z$ ($i=1,\dots,l;j=1,\dots,m$) 
such that the matrix
\[
\boldsymbol{\lambda} = 
\Big[\lambda^{(i,j)}\Big]_{\substack{i=1,\dots,l \\ j=1,\dots,m}}
\]
has rank $l$, the square matrix $\boldsymbol{\lambda \varepsilon}_n$
is non-singular for infinitely many $n$. 

Then
\[
\dim_\Q ( \Q + \Q\gamma_1 + \cdots + \Q\gamma_m) \geqslant 2+m-l.
\]
\end{theorem}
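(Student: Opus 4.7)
The plan is to mimic the proof of Theorem~2.1, replacing its ``a non-zero integer cannot tend to $0$'' step by the stronger ``a non-zero rational, once multiplied by $A_n:=\prod_{\nu=1}^{l}D_n^{(m-\nu+1)}/\delta_n^{(\nu)}$, cannot tend to $0$ while remaining integral''. I would argue by contradiction: assume $\dim_\Q(\Q+\Q\gamma_1+\cdots+\Q\gamma_m)\leq 1+m-l$ and extract integer matrices $\boldsymbol\varpi$, $\boldsymbol\omega$ exactly as in the proof of Theorem~2.1, so that $\boldsymbol\varpi+\boldsymbol\omega\boldsymbol\gamma=\boldsymbol 0$ with $[\boldsymbol\varpi\,|\,\boldsymbol\omega]$ and $\boldsymbol\omega$ both of rank $l$. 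The non-vanishing hypothesis applied to $\boldsymbol\lambda=\boldsymbol\omega$ then yields $\det(\boldsymbol\omega\boldsymbol\varepsilon_n)\neq 0$ for infinitely many $n$; the task is to prove that $A_n\det(\boldsymbol\omega\boldsymbol\varepsilon_n)$ is at once (i)~an integer and (ii)~a sequence tending to $0$.

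Property (ii) is immediate from the Binet-Cauchy formula $\det(\boldsymbol\omega\boldsymbol\varepsilon_n)=\sum_{\boldsymbol\mu}\det\boldsymbol\omega^{(\textbf{-},\boldsymbol\mu)}\det\boldsymbol\varepsilon_n^{(\boldsymbol\mu,\textbf{-})}$ combined termwise with the refined smallness assumption, the $\det\boldsymbol\omega^{(\textbf{-},\boldsymbol\mu)}$ being fixed integers. For property (i), I would rewrite $\boldsymbol\omega\boldsymbol\varepsilon_n=-[\boldsymbol\varpi\,|\,\boldsymbol\omega]\bigl[\begin{smallmatrix}\boldsymbol q_n\\\boldsymbol p_n\end{smallmatrix}\bigr]$ as in the proof of Theorem~2.1 and apply Binet-Cauchy a second time, now to this $l\times(m+1)$-by-$(m+1)\times l$ product. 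The outcome is a $\Z$-linear combination, with coefficients among the $l\times l$ minors of the integer matrix $[\boldsymbol\varpi\,|\,\boldsymbol\omega]$, of the $l\times l$ minors of the rational matrix $\bigl[\begin{smallmatrix}\boldsymbol q_n\\\boldsymbol p_n\end{smallmatrix}\bigr]$ indexed by $l$-subsets of its $m+1$ rows. It remains to show that $A_n$ clears the denominator of each such minor.

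To verify this, expand the minor by Leibniz. Each monomial is, up to sign, the product of at most one factor $q_n^{(\cdot)}$ and of either $l$ or $l-1$ factors $p_n^{(\mu_k,\cdot)}$, the $\mu_k$'s being distinct elements of $\{1,\ldots,m\}$. The $\delta$-denominators of $A_n$ are absorbed column-wise: each $\delta_n^{(\nu)}$ is paired with the unique factor lying in column $\nu$, through either $q_n^{(\nu)}/\delta_n^{(\nu)}\in\Z$ or $D_n^{(\mu_2)}p_n^{(\mu_1,\nu)}/\delta_n^{(\nu)}\in\Z$ from \eqref{Eq_Sec2.4refinement}. The $D$-factors are then matched with the $p$-factors by a staircase rule: sorting the $p$-factors by decreasing $\mu_k$, pair the largest with $D_n^{(m)}$, the next with $D_n^{(m-1)}$, and so on. This matching is feasible because the $\tau$-th largest element of any $k$-subset ($k\leq l$) of $\{1,\ldots,m\}$ is at most $m-\tau+1$, so the inequality $\mu_k\leq m-\tau+1$ required by \eqref{Eq_Sec2.4refinement} holds at every stage; any unused $D$-factors remain as harmless positive integer multipliers.

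Putting it all together, $A_n\det(\boldsymbol\omega\boldsymbol\varepsilon_n)$ is an integer that is non-zero for infinitely many $n$ and tends to $0$---a contradiction. The main obstacle is precisely the staircase matching just described: one has to confirm that the asymmetric denominator $A_n$, in which $D_n^{(m)},D_n^{(m-1)},\ldots,D_n^{(m-l+1)}$ replace the cruder $(D_n^{(m)})^l$, genuinely captures the true denominator of each minor. This reduces to the pigeonhole estimate on the $\tau$-th largest element of an $l$-subset of $\{1,\ldots,m\}$; once it is in place, the remainder of the argument is a direct transcription of the proof of Theorem~2.1.
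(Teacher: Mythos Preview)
Your proposal is correct and follows essentially the same route as the paper's proof: contradiction via $\boldsymbol\varpi,\boldsymbol\omega$, the identity $\boldsymbol\omega\boldsymbol\varepsilon_n=-[\boldsymbol\varpi\,|\,\boldsymbol\omega]\bigl[\begin{smallmatrix}\boldsymbol q_n\\\boldsymbol p_n\end{smallmatrix}\bigr]$, Binet--Cauchy in both directions, and the key integrality check for $A_n$ times each $l\times l$ minor of $\bigl[\begin{smallmatrix}\boldsymbol q_n\\\boldsymbol p_n\end{smallmatrix}\bigr]$. The only cosmetic difference is that the paper compresses your Leibniz-plus-staircase argument into a single sentence: multiply the $i$-th row of the minor by $D_n^{(m-l+i)}$ and divide the $j$-th column by $\delta_n^{(j)}$, then observe that every entry is an integer because $\xi_i\leqslant m-l+i+1$ whenever $1\leqslant\xi_1<\cdots<\xi_l\leqslant m+1$---this is exactly your pigeonhole bound, packaged as a row/column operation rather than term-by-term.
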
 
\begin{proof}
We may argue as in the proof of Theorem 2.1 above, the only difference being that, 
with the notations therein for $\boldsymbol\omega$ and $\boldsymbol\varpi$, here 
\[
\frac{D_n^{(m)}}{\delta_n^{(1)}}\cdots\frac{D_n^{(m-l+1)}}{\delta_n^{(l)}} 
\det \boldsymbol{\omega \varepsilon}_n
\]
is a non-zero integer, because
\[
\det \boldsymbol{\omega \varepsilon}_n 
	= - \det \left[ \boldsymbol{\varpi}\; |\; \boldsymbol{\omega} \right]\ 
	\left[ \begin{matrix} \boldsymbol{q}_n \\ - \\ \boldsymbol{p}_n \end{matrix} \right] 
	= - \sum_{\boldsymbol{\xi}} 
		\det \left[ \boldsymbol{\varpi}\; |\; \boldsymbol{\omega} 
												\right]^{(\textbf{-}, \boldsymbol{\xi})}
		\det \left[ \begin{matrix} \boldsymbol{q}_n \\ - \\ \boldsymbol{p}_n \end{matrix} 														\right]^{(\boldsymbol{\xi}, \textbf{-})},
\]
and each
\[
\frac{D_n^{(m)}}{\delta_n^{(1)}}\cdots\frac{D_n^{(m-l+1)}}{\delta_n^{(l)}} 
\det \left[ \begin{matrix} \boldsymbol{q}_n \\ - \\ \boldsymbol{p}_n \end{matrix} 														\right]^{(\boldsymbol{\xi}, \textbf{-})}, 
			\qquad 1\leqslant \xi_1<\cdots<\xi_l\leqslant m+1,
\]
is an integer, as is easily seen on multiplying the $i$-th row of 
\[
\left[ \begin{matrix} \boldsymbol{q}_n \\ - \\ \boldsymbol{p}_n \end{matrix} 														\right]^{(\boldsymbol{\xi}, \textbf{-})}
\]
by $D_n^{(m-l+i)}$, dividing its $j$-th column by $\delta_n^{(j)}$ and using 
\eqref{Eq_Sec2.4refinement}.
\end{proof}
\begin{remark}
In Theorem 2.4 we can change the non-vanishing assumption, identical to that 
in Theorem 2.1, by replacing it with the non-vanishing assumption in Theorem 2.2. 
Also, as in the setting of Theorem 2.3 we can enlarge the range for $\nu$ 
allowing $\nu=0,\dots,n$ (i.e.: we have more sequences at our disposal), suppose that
\[
\frac{D_n^{(m)}}{\delta_n^{(\nu_1)}}\cdots\frac{D_n^{(m-l+1)}}{\delta_n^{(\nu_l)}}\,
\det \boldsymbol{\varepsilon}_n^{(\boldsymbol{\mu},\boldsymbol{\nu})} 
													\to 0 \quad (n\to\infty),
\]
and that the non-vanishing assumption in Theorem 2.3 holds. Then the conclusion on the 
dimension of the vector space over $\Q$ spanned by $1,\gamma_1,\dots,\gamma_m$ holds 
all the same.
\end{remark}
\begin{remark}
Theorem 2.4 above is equivalent to its special case where $\delta_n^{\nu}=1$, 
$\nu=1,\dots,l$ (simply put $\widehat{q}_n^{(\nu)}=q_n^{(\nu)}$ 
and $\widehat{p}_n^{(\mu,\nu)}=p_n^{(\mu,\nu)}$). We decided to present 
it in that form in order to stress its meaning in the context outlined 
in the introduction, in which the sequences $D_n^\mu$ represents 
the rough estimate of the denominators of the approximations, while 
the sequences $\delta_n^\nu$ represent the {\it arithmetical correction} arising, 
e.g, from the permutation group method, or from other methods. On the other hand, 
the sequences $\varepsilon_n^{(\mu,\nu)}$ come from a purely analytic construction, 
without any direct consideration of the denominators. Next section is devoted to 
obtaining an estimate of $\det \boldsymbol{\varepsilon}_n^{(\boldsymbol{\mu},
\textbf{-} )}$ in a special situation.
\end{remark}
\begin{remark}
The key player in the above theorem are Grassmann's (or Pl\"ucker's) coordinates 
(i.e.: the maximal order {\it minors}) of the matrix
\begin{equation}				\label{Eq_Sec2.4GrassmannPlucker}
\left[ \begin{matrix} \boldsymbol{q}_n \\ - \\ \boldsymbol{p}_n \end{matrix} 	\right];
\end{equation}
our assumptions just ensure that they become integers, after multiplication 
by $\widehat{D}_n\in\Z$, and, at the same time, $\widehat{D}_n \det 
\boldsymbol{\varepsilon}_n^{(\boldsymbol{\mu},\textbf{-} )} \to 0$. In other words, 
the last theorem implicitly involves an {\it height} of the matrix
\eqref{Eq_Sec2.4GrassmannPlucker}. This height is central in Diophantine geometry: 
see \cite{BombieriVaaler}, and Amoroso's proof of the Nesterenko criterion in \cite{Colmez}. 
Our criterion can be easily extended, as usual, to the linear independence over an imaginary 
quadratic extension of $\Q$, and a generalization of it to arbitrary number fields, 
as in \cite[Proposition 4.1]{Marcovecchio2006}, would arguably involve this height. 

Moreover, it would be interesting to obtain a quantitative version of our criterion, 
yielding a linear independence {\it measure}.
\end{remark}
\section{Minors of the Casoratian matrix}
\subsection{Notation and purpose}
Let $\alpha_n^{(0)},\dots,\alpha_n^{(m)}$ be sequences of complex numbers, and we 
generally assume that $\alpha_n^{(0)}\alpha_n^{(m)}\not=0$. For most applications 
we have in mind, we also require
\begin{equation}			\label{Eq_Sec3.1lim}
\lim_{n\to\infty} \alpha_n^{(j)} = \alpha^{(j)} \quad (j=0,\dots,m),
\end{equation}
in which case we also assume $\alpha_0\alpha_m\not=0$. Let $x_n$ be a sequence 
of complex numbers satisfying 
\begin{equation}			\label{Eq_Sec3.1=0}
\alpha_n^{(m)} x_{n+m} + \alpha_n^{(m-1)} x_{n+m-1} + \cdots + \alpha_n^{(0)} x_n = 0.
\end{equation}
The coefficients $\alpha_n^{(m)}$ and $\alpha_n^{(0)}$ are said to be the {\it highest 
order} and {\it lowest order} coefficients of \eqref{Eq_Sec3.1=0}. It is well known 
that the set of solutions of \eqref{Eq_Sec3.1=0} is a vector space, and that 
\eqref{Eq_Sec3.1=0} can be written as a first order linear recurrence system. 
Given $m$ solutions $x_n^{(1)},\dots,x_n^{(m)}$, they are linearly independent if and 
only if the Casoratian matrix \cite{Casorati}, sometimes also called Wronskian by analogy 
with the differential equation setting,
\[
\boldsymbol{x}_n =  \left[ x_{n+i-1}^{(j)} \right]_{\substack{i=1,\dots,m \\ j=1,\dots,m}}
\]
is non-singular for some $n$ (and therefore for any $n$). In such a case, 
any solution of \eqref{Eq_Sec3.1=0} is a linear combination of $x_n^{(1)},
\dots,x_n^{(m)}$ with constant coefficients (i.e.: independent of $n$), 
and $x_n^{(1)},\dots,x_n^{(m)}$ is said to be a {\it basis of solutions} of 
\eqref{Eq_Sec3.1=0}. Also, $\det \boldsymbol{x}_n$ satisfies the discrete 
Abel formula (see \cite[Problem 2.16.21]{Agarwal})
\[
\det \boldsymbol{x}_{n+1} = (-1)^m \frac{\alpha_n^{(0)}}{\alpha_n^{(m)}} 
\det \boldsymbol{x}_n. 
\]
In other words, $\det \boldsymbol{x}_n$ satisfies a first order linear recurrence 
equation, whose highest and lowest order coefficients are, up to the sign, the highest 
and the lowest coefficient of the linear equation \eqref{Eq_Sec3.1=0}, respectively. 
Hence the coefficients of such a recurrence are independent of the particular basis 
of solutions for \eqref{Eq_Sec3.1=0}. A bit more generally, the coefficients 
$(-1)^{m-r} \alpha_n^{(r)}$, for $r=0,\dots,m$, are easily seen to be proportional to 
\[
\det\left[ x_{n+i}^{(j)} \right]_{\substack{i=0,\dots,\widehat{r},\dots,m \\ j=1,\dots,m}},
\quad r=0,\dots,m,
\]
see \cite[\S 285]{PincherleAmaldi}. We also recall that, regardless of the equation 
\eqref{Eq_Sec3.1=0}, $r$ sequences $z_n^{(1)},\dots,z_n^{(s)}$ are linearly independent 
if and only if 
\[
\det\left[ z_{n+i-1}^{(j)} \right]_{\substack{i=1,\dots,s \\ j=1,\dots,s}}\not=0 
\quad \text{ for infinitely many } n,
\]
see \cite[\S 7]{Casorati}, \cite[\S 279]{PincherleAmaldi}.

The purpose of this section is to study the asymptotic behavior of the $l\times l$ minors 
of $\boldsymbol x_n$. We outline a possible strategy to achieve this goal, which is to 
find a difference equation satisfied by those minors; see \cite[Problem 2.16.23]{Agarwal} 
for the case $l=m-1$ with contiguous rows, while \eqref{Eq_Sec3.1=0} obviously 
copes with the case $l=1$. Then we explain how to circumvent the difficulties 
that arise from that method. Before starting with, we briefly recall the most 
important results by Poincar\'e, Perron and Pituk about the asymptotic behavior 
of solutions of \eqref{Eq_Sec3.1=0} satisfying \eqref{Eq_Sec3.1lim}. 

Minors of the Casoratian (or Wronskian) matrix are key tools in the theory of difference 
(or differential) equations, with regard to disconjugacy, factorization, discrete Rolle 
theorem, and several important results on the same vain: see the milestone paper 
\cite{Hartman}; we refer to \cite[Chapter 10]{Agarwal} for a wide and (relatively) 
updated literature, and to \cite{Coppel} for a nice introduction and some perspectives 
on older results.
\subsection{A short account on Poincar\'e-Perron-Pituk's theorems}
Concerning solutions of \eqref{Eq_Sec3.1=0} with the property \eqref{Eq_Sec3.1lim}, 
Poincar\'e \cite{Poincare} \cite[Theorem 2.14.1]{Agarwal} proved the following: 
if the moduli of the roots $\lambda_1,\dots,\lambda_m$ of the characteristic polynomial 
\[
a^{(m)} \lambda^m + a^{(m-1)} \lambda^{(m-1)} + \cdots + a^{(0)}
\]
are distinct, then either $x_n=0$ for any sufficiently large $n$, or 
\[
\lim_{n\to\infty} \frac{x_{n+1}}{x_n} = \lambda_j \qquad \text{ for some } j=1,\dots,m.
\]
Later on, Perron \cite{Perron1909} \cite[Theorem 2.14.2]{Agarwal} obtained a more precise 
result: there exists a basis $x_n^{(1)},\dots,x_n^{(m)}$ of solutions of
\eqref{Eq_Sec3.1=0} such that 
\[
\lim_{n\to\infty} \frac{x_{n+1}^{(j)}}{x_n^{(j)}} = \lambda_j \qquad \text{ for all } 
j=1,\dots,m.
\]
Also, in the more general situation where the moduli $|\lambda_j|$, and even the roots 
$\lambda_j$ themselves, may coincide, Perron \cite{Perron1909} \cite{Perron1921}  proved 
that there exists a basis of solutions \eqref{Eq_Sec3.1=0} such that 
\[
\limsup_{n\to\infty} \sqrt[n]{|x_n^{(j)}|} = |\lambda_j| \qquad \text{ for all } 
j=1,\dots,m.
\]
In the early 2000's, Pituk \cite{Pituk2002} obtained a new limit relation for all 
non-zero solutions of \eqref{Eq_Sec3.1=0}, without any assumption on the 
muduli $|\lambda_j|$, and even without assuming anything on $a_n^{(0)}$ or $a^{(0)}$:
\[
\lim_{n\to\infty} \sqrt[n]{|x_n|+|x_{n+1}|+\cdots+|x_{n+m-1}|} = |\lambda_j| 
											\qquad \text{ for some } j=1,\dots,m.
\]
The last result is sufficient for certain applications, see \cite{Marcovecchio2014}.
Thus, a rough guess about how to manage with asymptotic behaviors of minors of 
$\boldsymbol x_n$ is to obtain a difference equation for them. It is also worth 
mentioning a theorem by Buslaev refinement \cite{Buslaev} of Poincar\'e's theorem, 
which again does not assume that the roots $\lambda_j$ of the characteristic polynomial 
are distinct in modulus: for any non-zero solution $x_n$ of \eqref{Eq_Sec3.1=0} 
\[
\limsup_{n\to\infty} \sqrt[n]{|x_n|} = |\lambda_j| \qquad \text{ for some } 
j=1,\dots,m,
\]
and $x_n$ satisfies a linear recurrence equation similar to \eqref{Eq_Sec3.1=0}, 
whose monic characteristic polynomial divides the monic characteristic polynomial of 
\eqref{Eq_Sec3.3detdet=0}, and whose characteristic root are all equal in modulus. 
As it was remarked by Zudilin \cite{ZudilinDifference}, this implies that if $x_n$ 
is a non-zero solution of \eqref{Eq_Sec3.1=0} such that 
\[
\limsup_{n\to\infty} \sqrt[n]{|x_n|} = |\lambda_1|,
\]
and if $|\lambda_j|\not=|\lambda_1|$ for $j\not=1$, then 
\[
\lim_{n\to\infty} \frac{x_{n+1}}{x_n} = \lambda_1.
\]
\subsection{The difference equations for the minors}
The criteria in Sect.2 are designed to deal with two different situations: either we have 
$l$ linear independent solutions of \eqref{Eq_Sec3.1=0}, or we have a basis of 
solutions, and select $l$ solution within this basis. Here we unify the exposition: if   
$x_n^{(1)},\dots,x_n^{(m)}$ are $m$ solutions of \eqref{Eq_Sec3.1=0}, for now 
not necessarily linear independent, we select the first $l$ of them and denote
\[
\boldsymbol{x}_n^{(\textbf{-} ,l)} = \left[ x_{n+i-1}^{(j)} 
									\right]_{\substack{i=1,\dots,m \\ j=1,\dots,l}}.
\]
If we take only $l$ solutions of \eqref{Eq_Sec3.1=0} from the very beginning, 
the discussion that follows does not change.

For any $1\leqslant \mu_1 <\cdots <\mu_l\leqslant m$, we pick the square
sub-matrix of $\boldsymbol{x}_n^{(\textbf{-} ,l)}$ with the corresponding rows:
\[
\boldsymbol{x}_n^{(\boldsymbol{\mu},l)} = \left[ x_{n+\mu_i-1}^{(j)} 
									\right]_{\substack{i=1,\dots,l \\ j=1,\dots,l}}.
\]
We wish to find a linear recurrence equation satisfied by 
\[
\det \boldsymbol{x}_n^{(\boldsymbol{\mu},l)}.
\]
To this end, we write
\[
\boldsymbol{x}_{n+1} = \boldsymbol{\Psi}_n \boldsymbol{x}_n,
\]
where 
\[
\boldsymbol{\Psi}_n = \left[ \Psi_n^{(i,j)} \right]_{\substack{i=1,\dots,m \\ j=1,\dots,m}}
\]
is the companion matrix of \eqref{Eq_Sec3.1=0}: 
\[
\Psi_n^{(i,i+1)} = 1 \; (i=1,\dots,m-1), \quad 
\Psi_n^{(m,j)} = -\frac{\alpha_n^{(j-1)}}{\alpha_n^{(m)}} \; (j=1,\dots,m), \quad
\Psi_n^{(i,j)} = 0 \; \text{ otherwise}.
\]
It is worth noticing that
\[
\boldsymbol{\Psi}_n^{-1} = \left[ \breve\Psi_n^{(i,j)} 
							\right]_{\substack{i=1,\dots,m \\ j=1,\dots,m}},
\]
where
\[
\breve\Psi_n^{(1,j)} = -\frac{\alpha_n^{(j)}}{\alpha_n^{(0)}} \; (j=1,\dots,m), \quad
\breve\Psi_n^{(i+1,i)} = 1 \; (i=1,\dots,m-1), \quad 
\breve\Psi_n^{(i,j)} = 0 \; \text{ otherwise}, 
\]
and that if $\lambda_n^{(1)},\dots,\lambda_n^{(n)}$ are the eigenvalues 
of $\boldsymbol\Psi_n$, then $\boldsymbol w_{n+1} = \boldsymbol\Lambda_n \boldsymbol z_n$, 
where
\[
\Lambda_n^{(j,j)} = \lambda_n^{(j)} \; (j=1,\dots,m), 
	\quad \Lambda_n^{(i,i+1)} = 1 \; (i=1,\dots,m-1), \quad  
	\Lambda_n^{(i,j)} = 0 \; \text{ otherwise},
\]
and $\boldsymbol z_n$ and $\boldsymbol w_{n+1}$ are defined by $z_n^{(j)}=x_n^{(j)}$, 
$z_{n+i}^{(j)}=x_{n+i}^{(j)}- \lambda_n^{(i)} x_{n+i-1}^{(j)}$ for $i=1,\dots,m-1$, and 
similarly $w_{n+1}^{(j)}=x_{n+1}^{(j)}$, 
$w_{n+i+1}^{(j)}=x_{n+i+1}^{(j)}- \lambda_n^{(i)} x_{n+i}^{(j)}$ for $i=1,\dots,m-1$. So 
far, we are not assuming that the Casoratian matrix $\boldsymbol{x}_n$ is non-singular. 
Incidentally,
\[
\det \boldsymbol{x}_{n+1} = \det \boldsymbol{\Psi}_n \det \boldsymbol{x}_n,
\]
with
\[
\det \boldsymbol{\Psi}_n = (-1)^n \frac{\alpha_n^{(0)}}{\alpha_n^{(m)}},
\]
gives us the recurrence equation for $\det \boldsymbol{x}_n$ displayed above, 
i.e. it settles the case $l=m$, while \eqref{Eq_Sec3.1=0} obviously copes 
with the case $l=1$. Plainly, $\det \boldsymbol x_n = \det \boldsymbol z_n 
= \det \boldsymbol w_n$ 
and $\det \boldsymbol\Psi_n=\det \boldsymbol\Lambda_n=\lambda_n^{(1)}\dots\lambda_n^{(m)}$. 
 
By induction on $k$,
\[
\boldsymbol{x}_{n+k} = \boldsymbol{\Psi}_{n+k-1}\cdots\boldsymbol{\Psi}_n \boldsymbol{x}_n,
\]
hence
\[
\boldsymbol{x}_{n+k}^{(\boldsymbol{\mu},l)} = 
		\left[ \boldsymbol{\Psi}_{n+k-1}\cdots
				\boldsymbol{\Psi}_n \right]^{(\boldsymbol{\mu}, \textbf{-} )}
	\boldsymbol{x}_n^{( \textbf{-} ,l)}.
\]
Here and hereafter, the empty product of matrices is the identity matrix. By the 
Binet-Cauchy formula,
\[
\det \boldsymbol{x}_{n+k}^{(\boldsymbol{\mu},l)} = \sum_{\boldsymbol{\nu}} 
		\det \left[ \boldsymbol{\Psi}_{n+k-1}\cdots
				\boldsymbol{\Psi}_n \right]^{(\boldsymbol{\mu},\boldsymbol{\nu})}
		\det \boldsymbol{x}_n^{( \boldsymbol{\nu}, l)},
\]
where the sum is over all $\boldsymbol{\nu}=(\nu_1,\dots,\nu_l)$ such that 
$1\leqslant \nu_1<\cdots< \nu_l\leqslant m$. 

For a fixed $\boldsymbol{\mu}$, and for each $k$, we consider the coordinates 
of $\det \boldsymbol{x}_{n+k}^{(\boldsymbol{\mu},l)}$ with respect 
to $\det\boldsymbol{x}_n^{( \boldsymbol{\nu}, l)}$, where $\boldsymbol{\nu}$ 
varies: these are precisely $\det \left[ \boldsymbol{\Psi}_{n+k-1}\cdots
				\boldsymbol{\Psi}_n \right]^{(\boldsymbol{\mu},\boldsymbol{\nu})}$. 
Thus, by letting $k$ vary from $0$ to $\binom{m}{l}$, it is straightforward to 
obtain a linear difference equation of order $\binom{m}{l}$ satisfied 
by $y_n=\det \boldsymbol{x}_n^{(\boldsymbol{\mu},l)}$:
\begin{equation}			\label{Eq_Sec3.3detdet=0}
\sum_{k=0}^{\binom{m}{l}} (-1)^k \det \left[
					\det \left[ \boldsymbol{\Psi}_{n+j-1}\cdots
				\boldsymbol{\Psi}_n \right]^{(\boldsymbol{\mu},\boldsymbol{\nu})}
	\right]_{\substack{j=0,\dots,\widehat{k},\dots,\binom{m}{l} 
					\\ 1\leqslant\nu_1< \cdots< \nu_l\leqslant m}} 
		\;	y_{n+k}= 0,
\end{equation}
where $\widehat{k}$ means that the index $k$ is omitted in the range for $j$. Note 
that any minor $y_n=\det \boldsymbol{x}_n^{(\boldsymbol{\mu},\boldsymbol\upsilon)}$  
is a solution of \eqref{Eq_Sec3.3detdet=0}, because the coefficients 
in \eqref{Eq_Sec3.3detdet=0} do not depend on the choice of the 
columns $\upsilon_1,\dots,\upsilon_l$, while they do depend on the 
choice of the rows $\mu_1,\dots,\mu_l$. Also, in \eqref{Eq_Sec3.3detdet=0} and in similar 
formulas below, unless otherwise stated, we can take any ordering in the set 
for $\boldsymbol\nu$ (of course, the same each time, in the same formula).

Again keeping $\boldsymbol\mu$ fixed, the $\binom{m}{l}$ solutions 
$y_n=\det \boldsymbol{x}_n^{(\boldsymbol{\mu},\boldsymbol\upsilon)}$ 
of \eqref{Eq_Sec3.3detdet=0} found above, 
with $1\leqslant \upsilon_1<\cdots<\upsilon_l\leqslant m$, 
are linearly independent if and only if the Casoratian matrix
\[
\left[ \det \boldsymbol{x}_{n+j-1}^{(\boldsymbol{\mu},\boldsymbol\upsilon)} 
		\right]_{\substack{j=1,\dots,\binom{m}{l} \\ 
			1\leqslant \upsilon_1<\cdots<\upsilon_l\leqslant m}}
\]
is non-singular. Just as above, we have 
\begin{multline*}
\det \left[ \det \boldsymbol{x}_{n+j-1}^{(\boldsymbol{\mu},\boldsymbol\upsilon)} 
		\right]_{\substack{j=1,\dots,\binom{m}{l} \\ 
			1\leqslant \upsilon_1<\cdots<\upsilon_l\leqslant m}} = 
\det \left[ \sum_{\boldsymbol{\nu}} 
		\det \left[ \boldsymbol{\Psi}_{n+j-2}\cdots
				\boldsymbol{\Psi}_n \right]^{(\boldsymbol{\mu},\boldsymbol{\nu})}
		\det \boldsymbol{x}_n^{( \boldsymbol{\nu}, \boldsymbol{\upsilon})}
		\right]_{\substack{j=1,\dots,\binom{m}{l} \\ 
			1\leqslant \upsilon_1<\cdots<\upsilon_l\leqslant m}} \\
= \det \left[ \det \left[ \boldsymbol{\Psi}_{n+j-2}\cdots
				\boldsymbol{\Psi}_n \right]^{(\boldsymbol{\mu},\boldsymbol{\nu})}
		\right]_{\substack{j=1,\dots,\binom{m}{l} \\ 
			1\leqslant \nu_1<\cdots<\nu_l\leqslant m}}
  \det \Big[ \det \boldsymbol{x}_n^{( \boldsymbol{\nu}, \boldsymbol{\upsilon})}
		\Big]_{\substack{ 1\leqslant \nu_1<\cdots<\nu_l\leqslant m \\ 
			1\leqslant \upsilon_1<\cdots<\upsilon_l\leqslant m}},
\end{multline*}
where the Binet formula $\det AB = \det A \det B$ was used. Here, one more time, 
the ordering in the set for $\boldsymbol\nu$ (resp. for $\boldsymbol\upsilon$) must 
be the same at each occurrence, while it needs not to be identical for $\boldsymbol\nu$ 
{\it and } $\boldsymbol\upsilon$, though it would be more consistent; on the other hand, 
there is no way to choose the same ordering for $j$ and $\boldsymbol{\nu}$ (that would 
just be non-sense). By \eqref{Eq_Sec3.5GrassmannDeterminant} below,
\[
\det \left[ \det \boldsymbol{x}_{n+j-1}^{(\boldsymbol{\mu},\boldsymbol\upsilon)} 
		\right]_{\substack{j=1,\dots,\binom{m}{l} \\ 
			1\leqslant \upsilon_1<\cdots<\upsilon_l\leqslant m}} =  
\det \left[ \det \left[ \boldsymbol{\Psi}_{n+j-2}\cdots
				\boldsymbol{\Psi}_n \right]^{(\boldsymbol{\mu},\boldsymbol{\nu})}
		\right]_{\substack{j=1,\dots,\binom{m}{l} \\ 
			1\leqslant \nu_1<\cdots<\nu_l\leqslant m}} 
	\left( \det \boldsymbol{x}_n \right)^{\binom{m-1}{l-1}}.
\]
We remark that the highest and the lowest order coefficients in \eqref{Eq_Sec3.3detdet=0}, 
respectively, are
\[
(-1)^{\binom{m}{l}} \det \left[ \det \left[ \boldsymbol{\Psi}_{n+j-2}\cdots
				\boldsymbol{\Psi}_n \right]^{(\boldsymbol{\mu},\boldsymbol{\nu})}
		\right]_{\substack{j=1,\dots,\binom{m}{l} \\ 
			1\leqslant \nu_1<\cdots<\nu_l\leqslant m}} 
\]
and, using the Binet formula and \eqref{Eq_Sec3.5GrassmannDeterminant} again, 
\begin{multline}		\label{Eq_Sec3.4BinetAbel}
 \det \left[ \det \left[ \boldsymbol{\Psi}_{n+j-1}\cdots
				\boldsymbol{\Psi}_n \right]^{(\boldsymbol{\mu},\boldsymbol{\nu})}
		\right]_{\substack{j=1,\dots,\binom{m}{l} \\ 
			1\leqslant \nu_1<\cdots<\nu_l\leqslant m}} 	\\	= 
 \det \left[ \det \left[ \boldsymbol{\Psi}_{n+j-1}\cdots
				\boldsymbol{\Psi}_{n+1} \right]^{(\boldsymbol{\mu},\boldsymbol{\nu})}
		\right]_{\substack{j=1,\dots,\binom{m}{l} \\ 
			1\leqslant \nu_1<\cdots<\nu_l\leqslant m}}
		\left( \det \boldsymbol{\Psi}_n \right)^{\binom{m-1}{l-1}},
\end{multline}
in accordance with the discrete Abel formulas for \eqref{Eq_Sec3.1=0} and 
for \eqref{Eq_Sec3.3detdet=0}.

Our conclusion, for this subsection, reads as follows: if the quantity in \eqref{Eq_Sec3.4BinetAbel} 
is non-zero for some $n$ (thus is so for any $n$), and if $x_n^{(1)},\dots,x_n^{(m)}$ is a 
basis of solutions of \eqref{Eq_Sec3.1=0}, then 
$\{\det \boldsymbol{x}_n^{(\boldsymbol{\mu},\boldsymbol\upsilon)}: 
1\leqslant \upsilon_1<\cdots<\upsilon_l\leqslant m\}$ is a basis of 
solutions of \eqref{Eq_Sec3.3detdet=0}.
\bigskip
\begin{remark}
The coefficients of the recurrence equation \eqref{Eq_Sec3.3detdet=0} only depend on the 
coefficients of the recurrence \eqref{Eq_Sec3.1=0}, and do not depend on a 
basis of solutions, nor on a choice for the columns of the minor. To be more 
precise, since the coefficients of the matrix $\boldsymbol\Psi_n$ are either $0$ or, up 
to the sign, an elementary symmetric function in the eigenvalues $\lambda_n^{(1)},
\dots,\lambda_n^{(m)}$, 
\[
\text{Sym}^{(r)} (\lambda_n^{(1)},\dots,\lambda_n^{(m)}) 
	= \sum\limits_{1\leqslant \nu_1<\cdots<\nu_r\leqslant m} 
	\lambda_n^{(\nu_1)}\cdots\lambda_n^{(\nu_r)}, \quad r=0,\dots,m,
\]
where $\text{Sym}^{(0)} (\lambda_n^{(1)},\dots,\lambda_n^{(m)})=1$, for 
all $\boldsymbol\nu$ with $1\leqslant \nu_1<\cdots<\nu_l\leqslant m$ we have universal 
polynomials in $z_{i,r}$, with $i=1,\dots,\binom{m}{l}-1$, $r=1,\dots,m$, with integer 
coefficients and partial degree not exceeding $1$ in each of $z_{i,r}$, such that 
their values at $z_{i,r}=\text{Sym}^{(r)} (\lambda_{n+i}^{(1)},\dots,\lambda_{n+i}^{(m)})$  
are the coefficients of the equation \eqref{Eq_Sec3.3detdet=0}. 
\end{remark}
\begin{remark}
We stress that the lowest and highest order coefficients in \eqref{Eq_Sec3.1=0} and 
in \eqref{Eq_Sec3.3detdet=0} are related by \eqref{Eq_Sec3.4BinetAbel}, and that the lowest order 
coefficient for a given $n$ is, up to a non-zero constant, the highest order coefficient 
for $n+1$. For this reason, it it sufficient to check the non-vanishing of one of the 
two (say: the highest order coefficient) for any $n$, in order to apply the described 
method.
\end{remark}
\begin{remark}
There is an equivalent way to get the recurrence \eqref{Eq_Sec3.3detdet=0}, that we 
outline here. Let $\lambda_n^{(1)},\dots,\lambda_n^{(m)}$ be the (non-zero) roots of 
\[
\alpha_n^{(m)}\lambda^m + \alpha_n^{(m-1)}\lambda^{m-1}+\cdots+\alpha_n^{(0)} = 0,
\]
which, essentially, may be supposed to be distinct, as we are going to see. Then the 
rows of the $(m+1)\times (m+1)$ matrix
\[
\left[ \left[ x_{n+i}^{(j)} \right]_{\substack{i=0,\dots,m \\ j=1,\dots,l}} \vert 
	\left[ {\lambda_n^{(\nu_j)}}^i \right]_{\substack{i=0,\dots,m \\ j=l,\dots,m}} \right]
\]
are linearly dependent, so that its determinant vanishes. Here, $\nu_l,\dots,\nu_m$ 
are arbitrarily chosen indices with $1\leqslant \nu_l<\cdots<\nu_m\leqslant m$, so that 
we have $\binom{m}{l-1}$ such vanishing determinants, for each $n$. Each determinant can 
be expanded with the help of Laplace formula along the first $l$ columns, to obtain
\begin{equation} 			\label{Eq_Sec3.4Laplace}
\sum\limits_{0\leqslant \mu_1<\cdots<\mu_l\leqslant m} (-1)^{|\boldsymbol\mu|} 
	\det \left[ x_{n+i}^{(j)} \right]_{\substack{i=\mu_1,\dots,\mu_l \\ j=1,\dots,l}}
	\det \left[ {\lambda_n^{(\nu_j)}}^i \right]_{
			\substack{i=\widehat{\mu}_l,\dots,\widehat{\mu}_m \\ j=l,\dots,m}} = 0,
\end{equation}
where $|\boldsymbol\mu|=\mu_1+\cdots+\mu_l$, and $\widehat{\mu}_l,\dots,\widehat{\mu}_m$ 
are the complementary indices of $\mu_1,\dots,\mu_l$ in $0,\dots,m$. Thus, each sum 
contains $\binom{m+1}{l}=\binom{m}{l}+\binom{m}{l-1}$ terms, note, however, that only 
$2\binom{m}{l}-\binom{m-1}{l}=\binom{m}{l}+\binom{m-1}{l-1}$ of them have a minor of the 
Casoratian matrix as a factor. By considering consecutive values for $n$, we have 
only $\binom{m}{l-1}$ {\it new} terms, where {\it new} refers to their $x$-determinant 
factor, for each new value of $n$, and the same number of new equations that correspond 
to different choices of $\boldsymbol\nu$. Thus, for a fixed $\boldsymbol\mu$ 
with $1\leqslant \mu_1<\dots<\mu_l\leqslant m$, taking a linear combination of 
\eqref{Eq_Sec3.4Laplace} for $n,n+1,\dots,n+\binom{m}{l}-1$, we get a vanishing 
linear combination of terms of the type
\[
\det \left[ x_{n+i-1}^{(j)} \right]_{\substack{i=\mu_1,
										\dots,\mu_l \\ j=1,\dots,l}}
\] 
only, for $n,n+1,\dots,n+\binom{m}{l}$. Finally, we observe that 
each equation \eqref{Eq_Sec3.4Laplace} can be divided by $\text{Vandermonde } 
(\lambda_{\nu_l},\dots,\lambda_{\nu_m})$, and after this operation 
the $\lambda$-determinant factors in \eqref{Eq_Sec3.4Laplace} are replaced with polynomials 
in $\text{Sym}^{(r)}(\lambda_n^{(l)},\dots,\lambda_n^{(m)})$, and we do not need to 
assume that $\lambda_n^{(1)},\dots,\lambda_n^{(m)}$ are distinct. 
\end{remark}
\begin{remark}
Seemingly, yet another way to obtain the recurrence equation \eqref{Eq_Sec3.3detdet=0} is 
by induction on $m-l$, using the condensation formula \cite[(2.16)]{Krattenthaler}. 
\end{remark}
\subsection{Recurrences with constant coefficients}
Let us consider the special case when the coefficients of the equation 
\eqref{Eq_Sec3.1=0} are independent of $n$:
\begin{equation}			\label{Eq_Sec3.4independentofn}
\alpha^{(m)} x_{n+m} + \alpha^{(m-1)} x_{n+m-1} + \cdots + \alpha^{(0)} x_n = 0,
\end{equation}
and suppose that $\alpha^{(0)}\alpha^{(m)}\not=0$. If the roots $\lambda_1,\dots,
\lambda_m$ of the polynomial
\begin{equation}			\label{Eq_Sec3.4polynomial}
\alpha^{(m)} \lambda^{n+m} + \alpha^{(m-1)} \lambda^{n+m-1} + \cdots + \alpha^{(0)} = 0
\end{equation}
are distinct, then $x_n^{(j)}=\lambda_j^n$ ($j=1,\dots,m$) is 
a basis of solutions of \eqref{Eq_Sec3.4independentofn}, because 
\begin{equation}			\label{Eq_Sec3.4Vandermonde}
\det \left[ x_{n+i-1}^{(j)} \right]_{\substack{i=1,\dots,m \\ j=1,\dots,m}} = 
\det \left[ \lambda_j^{n+i-1} \right]_{\substack{i=1,\dots,m \\ j=1,\dots,m}} = 
(\lambda_1\cdots\lambda_m)^n \prod_{1\leqslant i<j\leqslant m} (\lambda_j-\lambda_i) 
\not = 0.
\end{equation}
The columns of the matrix in \eqref{Eq_Sec3.4Vandermonde} are the eigenvectors 
of the companion matrix 
\begin{equation}			\label{Eq_Sec3.4companionMatrix}
\boldsymbol{\Psi} = \left[ \Psi^{(i,j)} \right]_{\substack{i=1,\dots,m \\ j=1,\dots,m}}
\end{equation}
of the recurrence equation \eqref{Eq_Sec3.4independentofn}, defined by
\[
\Psi^{(i,i+1)} = 1 \; (i=1,\dots,m-1); \quad 
\Psi^{(m,j)} = -\frac{\alpha^{(j-1)}}{\alpha^{(m)}} \; (j=1,\dots,m); \quad
\Psi^{(i,j)} = 0 \; \text{ otherwise},
\]
so that
\[
\boldsymbol{\Psi x}_n = \boldsymbol x_n \boldsymbol \Delta, 
\]
where $\boldsymbol\Delta = {\rm diag }(\lambda_1,\dots,\lambda_m)$.

This holds in particular when $\lambda_1,\dots,\lambda_m$ additionally satisfy
\[
\lambda_{i+1}-\lambda_i = \varepsilon \quad 
	\text{ for any }\; i\not=k_1,k_1+k_2,\dots,k_1+\cdots+k_{r-1},
\]
where $k_1+\cdots+k_r=m$. After a few elementary manipulations on the columns 
of $\boldsymbol x_n$, dividing by a suitable power of $\varepsilon$ and making 
$\varepsilon\to 0$ (keeping the $r$ numbers $\lambda_{k_1},\lambda_{k_1+k_2},\dots,
\lambda_{k_1+\cdots+k_r}$ fixed) in \eqref{Eq_Sec3.4Vandermonde}, we obtain, by changing 
the notation, a basis of solutions of \eqref{Eq_Sec3.4independentofn}, 
which also is a basis of eigenvectors of $\boldsymbol\Psi$, 
when $\lambda_1,\dots,\lambda_r$ are the distinct roots of 
\eqref{Eq_Sec3.4polynomial} with multiplicities $k_1,\dots,k_r$:
\begin{multline*}
x_n^{(k)} = \binom{n}{k-1} \lambda_1^{n-k+1} \quad (k=1,\dots,k_1),\quad 
x_n^{(k_1+k)} = \binom{n}{k-1} \lambda_2^{n-k+1} \quad (k=1,\dots,k_2), \dots \\ \dots, 
x_n^{(k_1+\cdots+k_{r-1}+k)} = \binom{n}{k-1} \lambda_r^{n-k+1} \quad (k=1,\dots,k_r),
\end{multline*}
because now
\[
\det \left[ x_{n+i-1}^{(j)} \right]_{\substack{i=1,\dots,m \\ j=1,\dots,m}} = 
			(\lambda_1^{k_1}\cdots\lambda_r^{k_r})^n 
			\prod_{1\leqslant i<j\leqslant r} (\lambda_j-\lambda_i)^{k_i k_j} \not = 0,
\]
see \cite[pp.174--176]{Meray}, or, for a modern and well informed source, 
\cite[Theorem 20]{Krattenthaler}. 

Let us suppose, to avoid complications, that $\lambda_1,\dots,\lambda_m$ distinct. 
We may apply the arguments in Sect. 3.3, and find a difference equation for the minors
\[
y_n = \det \boldsymbol x_n^{\boldsymbol\mu,\boldsymbol\upsilon} = 
	\det \left[ \lambda_{\upsilon_j}^{n+\mu_i-1} 
		\right]_{\substack{i=1,\dots,l \\ j=1,\dots,l}} = 
	(\lambda_{\upsilon_1}\cdots\lambda_{\upsilon_l})^n 
	\det \left[ \lambda_{\upsilon_j}^{\mu_i-1} 
		\right]_{\substack{i=1,\dots,l \\ j=1,\dots,l}},
\] 
namely
\begin{equation}			\label{Eq_Sec3.4namelydetdet=0}
\sum_{k=0}^{\binom{m}{l}} (-1)^k \det 
	\left[	\det \left[ \boldsymbol{\Psi}^j \right]^{(\boldsymbol{\mu}\boldsymbol{\nu})}
	\right]_{\substack{j=0,\dots,\widehat{k},\dots,\binom{m}{l} 
					\\ 1\leqslant\nu_1< \cdots< \nu_l\leqslant m}} 
		\;	y_{n+k}= 0.
\end{equation}
If 
\[
\det \boldsymbol x_0^{\boldsymbol\mu,\boldsymbol\upsilon} = 
\det \left[ \lambda_{\upsilon_j}^{\mu_i-1} 
		\right]_{\substack{i=1,\dots,l \\ j=1,\dots,l}} \not = 0
\]
for all $\boldsymbol\upsilon$, then by \eqref{Eq_Sec3.4namelydetdet=0} 
the $\binom{m}{l}$ products 
$\lambda_{\boldsymbol\upsilon} = \lambda_{\upsilon_1} \cdots \lambda_{\upsilon_l}$, for 
$1\leqslant \upsilon_1 < \cdots <\upsilon_l \leqslant m$, are roots of the polynomial
\begin{equation}			\label{Eq_Sec3.4detdetlambdak}
\sum_{k=0}^{\binom{m}{l}} (-1)^k \det \left[
					\det \left[ \boldsymbol{\Psi}^j \right]^{(\boldsymbol{\mu},
															\boldsymbol{\nu})}
	\right]_{\substack{j=0,\dots,\widehat{k},\dots,\binom{m}{l} 
					\\ 1\leqslant\nu_1< \cdots< \nu_l\leqslant m}} 
		\; \lambda^k.
\end{equation}
In addition, if $\lambda_{\boldsymbol\upsilon}$ are all distinct, 
then the highest and lowest (see remark 3.2) coefficients of 
\eqref{Eq_Sec3.4detdetlambdak} are non-zero, because
\begin{multline*}
\det \left[
					\det \left[ \boldsymbol{\Psi}^j \right]^{(\boldsymbol{\mu},\boldsymbol{\nu})}
	\right]_{\substack{j=0,\dots,\binom{m}{l}-1 
					\\ 1\leqslant\nu_1< \cdots< \nu_l\leqslant m}}
\det \Big[ \det \boldsymbol x_0^{\boldsymbol\nu,\boldsymbol\upsilon} \Big]_{ 
\substack{1\leqslant\nu_1< \cdots< \nu_l\leqslant m \\ 
			1\leqslant\upsilon_1< \cdots< \upsilon_l\leqslant m}}
\\ =
\det \left[			\det \left[ \boldsymbol{\Psi}^j \boldsymbol x_0 				
						\right]^{(\boldsymbol{\mu},\boldsymbol{\upsilon})}
	\right]_{\substack{j=0,\dots,\binom{m}{l}-1 
					\\ 1\leqslant\upsilon_1< \cdots< \upsilon_l\leqslant m}}
 	=
\det \left[			\det \left[ \boldsymbol x_0 \boldsymbol{\Delta}^j				
						\right]^{(\boldsymbol{\mu},\boldsymbol{\upsilon})}
	\right]_{\substack{j=0,\dots,\binom{m}{l}-1 
					\\ 1\leqslant\upsilon_1< \cdots< \upsilon_l\leqslant m}} 
\\ = 
\det \Big[ \det \boldsymbol x_0^{\boldsymbol\mu,\boldsymbol\upsilon} 
					\lambda_{\boldsymbol\upsilon}^j 
	\Big]_{ \substack{ j=0,\dots,\binom{m}{l}-1 \\ 
			1\leqslant\upsilon_1< \cdots< \upsilon_l\leqslant m}} 
= \prod\limits_{1\leqslant\upsilon_1< \cdots< \upsilon_l\leqslant m} 
			\det \boldsymbol x_0^{\boldsymbol\mu,\boldsymbol\upsilon} \cdot  
\det \big[ \lambda_{\boldsymbol\upsilon}^j 
	\big]_{ \substack{ j=0,\dots,\binom{m}{l}-1 \\ 
			1\leqslant\upsilon_1< \cdots< \upsilon_l\leqslant m}} \not = 0 
\end{multline*}
and
\[
\det \Big[ \det \boldsymbol x_0^{\boldsymbol\nu,\boldsymbol\upsilon} \Big]_{ 
\substack{1\leqslant\nu_1< \cdots< \nu_l\leqslant m \\ 
			1\leqslant\upsilon_1< \cdots< \upsilon_l\leqslant m}}
=  \left( \det \boldsymbol{x}_0 \right)^{\binom{m-1}{l-1}} \not = 0.
\]
Putting this in a different way, by performing the previous trick for all the coefficients 
of the polynomial \eqref{Eq_Sec3.4detdetlambdak}, we see that, under the non-vanishing 
assumption  $\det \boldsymbol x_0^{\boldsymbol\mu,\boldsymbol\upsilon}\not=0$ for all 
$\boldsymbol\upsilon$, the polynomial \eqref{Eq_Sec3.4detdetlambdak} is a 
multiple (by a non-zero coefficient) of
\[
\sum_{k=0}^{\binom{m}{l}} (-1)^k 
\det \big[ \lambda_{\boldsymbol\upsilon}^j 
	\big]_{ \substack{ j=0,\dots,\widehat{k},\dots,\binom{m}{l} \\ 
			1\leqslant\upsilon_1< \cdots< \upsilon_l\leqslant m}}\, \lambda^k 
= {\rm Vandermonde} (\lambda_{\boldsymbol\upsilon}: \boldsymbol\upsilon)
\prod\limits_{1\leqslant\upsilon_1< \cdots< \upsilon_l\leqslant m} 
				(\lambda-\lambda_{\boldsymbol\upsilon}),
\]
where
\[
{\rm Vandermonde} (\lambda_{\boldsymbol\upsilon}: \boldsymbol\upsilon) = 
\det \big[ \lambda_{\boldsymbol\upsilon}^j 
	\big]_{ \substack{ j=0,\dots,\binom{m}{l}-1 \\ 
			1\leqslant\upsilon_1< \cdots< \upsilon_l\leqslant m}} \not = 0.
\]
Moreover, the Casoratian matrix 
\[
\left[\det \boldsymbol x_{n+j-1}^{\boldsymbol\mu,\boldsymbol\upsilon} 
\right]_{\substack{
		j=0,\dots,\binom{m}{l} \\ 1\leqslant \upsilon_1<\cdots< \upsilon_l\leqslant m }}
\]
is non-singular, because
\[
\det \left[\det \boldsymbol x_{n+j-1}^{\boldsymbol\mu,\boldsymbol\upsilon} 
\right]_{\substack{
		j=0,\dots,\binom{m}{l} \\ 1\leqslant \upsilon_1<\cdots< \upsilon_l\leqslant m }} 
= \prod\limits_{1\leqslant\upsilon_1< \cdots< \upsilon_l\leqslant m} 
			\det \boldsymbol x_0^{\boldsymbol\mu,\boldsymbol\upsilon} \cdot  
\det \big[ \lambda_{\boldsymbol\upsilon}^j 
	\big]_{ \substack{ j=0,\dots,\binom{m}{l}-1 \\ 
			1\leqslant\upsilon_1< \cdots< \upsilon_l\leqslant m}} \not = 0.
\]

Clearly, if $\lambda_{\boldsymbol\upsilon}$ are distinct, then {\it a fortiori} 
$\lambda_j$ are distinct. Note, however, that $y_n$ satisfies the difference equation
\[
y_{n+q} + \beta^{(q-1)} y_{n+q-1}+\cdots+\beta^{(0)} = 0,
\]
for $q=\binom{m}{l}$ and $\beta^{(0)},\dots,\beta^{(q-1)}$ defined by
\[
\prod\limits_{\boldsymbol\upsilon} (\lambda-\lambda_{\boldsymbol\upsilon}) = 
\lambda^q + \beta^{(q-1)} \lambda^{q-1}+\cdots+\beta^{(0)},
\]
regardless to whether $\lambda_{\boldsymbol\upsilon}$ are distinct or not. If they are 
not distinct, the minors $\det \boldsymbol x_n^{\boldsymbol\mu,\boldsymbol\upsilon}$ are 
no longer a basis of solutions of the recurrence \eqref{Eq_Sec3.4namelydetdet=0}. 

It is fairly possible that in concrete applications of the outlined method in the 
environment of our criteria in Sect.2, the assumption that $\lambda_{\boldsymbol\nu}$ 
are distinct is fulfilled. In this case, one can deal with the requirement that 
\eqref{Eq_Sec3.4BinetAbel} does not vanish, by combining the above discussion with \eqref{Eq_Sec3.1lim}, 
and recalling that $\alpha^{(0)}\alpha^{(m)}\not=0$. However, we seek for more generality, 
specially because the result that we present looks like much more ready-to-use than 
the recurrence \eqref{Eq_Sec3.3detdet=0}. On the other hand, in some cases one may wish to
apply Buslaev's theorem and Zudilin's corollary described above to \eqref{Eq_Sec3.3detdet=0}, 
which therefore is of some interest by itself. 
\subsection{The Sylvester-Franke Theorem}
The following fundamental result in the theory of determinants made its appearance 
in Sect. 3.3, and is crucial in rest of this section.
\begin{theorem}{(Sylvester-Franke's theorem \cite{Sylvester} \cite{Franke})}
Let $\boldsymbol\rho$ be a $m\times m$ matrix with entries in $\C$, and 
let $\lambda_1,\dots,\lambda_m$ be the eigenvalues of $\boldsymbol\rho$, 
repeated with their algebraic multiplicity. Then for all $l=1,\dots,m$ 
the eigenvalues of the matrix
\begin{equation}				\label{Eq_Sec3.5GrassmanMatrix}
\left[ \det \boldsymbol\rho^{(\boldsymbol\mu, \boldsymbol\nu)} 
		\right]_{\substack{1\leqslant \mu_1<\dots<\mu_l\leqslant m \\ 
							1\leqslant \nu_1<\dots<\nu_l\leqslant m}},
\end{equation}
whose $\binom{m}{l}$ rows and columns are arranged with the same (say: the 
lexicographical) ordering, are $\lambda_{\boldsymbol\upsilon}=\lambda_{\upsilon_1}
\cdots\lambda_{\upsilon_l}$, for $1\leqslant\upsilon_1<\cdots<\upsilon_l\leqslant m$, 
again repeated with their algebraic multiplicity. 

In particular  
\begin{equation}				\label{Eq_Sec3.5GrassmannDeterminant}
\det \left[ \det \boldsymbol\rho^{(\boldsymbol\mu, \boldsymbol\nu)} 
		\right]_{\substack{1\leqslant \mu_1<\dots<\mu_l\leqslant m \\ 
							1\leqslant \nu_1<\dots<\nu_l\leqslant m}} 
= \left( \det \boldsymbol\rho \right)^{\binom{m-1}{l-1}}. 
\end{equation}
\end{theorem}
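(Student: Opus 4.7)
My plan is to prove the spectral statement first and then deduce \eqref{Eq_Sec3.5GrassmannDeterminant} as a corollary, by reducing to the upper-triangular case via a similarity transformation. Write $C_l(\boldsymbol\rho)$ for the matrix in \eqref{Eq_Sec3.5GrassmanMatrix}. The backbone of the argument is that the compound-matrix operation is multiplicative: $C_l(AB)=C_l(A)C_l(B)$ for any two $m\times m$ complex matrices $A,B$. This identity is exactly the Binet-Cauchy formula applied entry-by-entry (and it has already appeared in Sect.~3.3). A direct corollary is that similar matrices have similar compounds; in particular $C_l(U\rho U^{-1})$ and $C_l(\rho)$ share their spectra.

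I would then invoke Schur's triangularization theorem: there exists a unitary $U$ and an upper triangular $T$ with diagonal entries $\lambda_1,\dots,\lambda_m$ (in any prescribed order, listed with algebraic multiplicity) such that $\boldsymbol\rho=UTU^{-1}$. By multiplicativity, the problem reduces to computing the spectrum of $C_l(T)$ for upper-triangular $T$.

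The heart of the proof is to verify that, when rows and columns of $C_l(T)$ are indexed by $l$-subsets in lexicographic order, $C_l(T)$ is itself upper triangular, with diagonal entries exactly $\lambda_{\boldsymbol\upsilon}=\lambda_{\upsilon_1}\cdots\lambda_{\upsilon_l}$. The diagonal entries are immediate, because $T^{(\boldsymbol\upsilon,\boldsymbol\upsilon)}$ inherits upper triangularity from $T$ and carries diagonal $\lambda_{\upsilon_1},\dots,\lambda_{\upsilon_l}$. For the triangularity of $C_l(T)$, suppose $\boldsymbol\mu>_{\mathrm{lex}}\boldsymbol\nu$ and let $k$ be the first coordinate with $\mu_k>\nu_k$. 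Then for all $i\geqslant k$ and $j\leqslant k$ one has $\mu_i\geqslant\mu_k>\nu_k\geqslant\nu_j$, so $T_{\mu_i,\nu_j}=0$. This produces an $(l-k+1)\times k$ block of zeros in the lower-left corner of $T^{(\boldsymbol\mu,\boldsymbol\nu)}$, forcing the bottom $l-k+1$ rows to lie in a coordinate subspace of dimension only $l-k$; hence the minor vanishes.

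The determinant identity \eqref{Eq_Sec3.5GrassmannDeterminant} then follows by multiplying eigenvalues and counting: each index $i\in\{1,\dots,m\}$ lies in exactly $\binom{m-1}{l-1}$ of the $l$-subsets $\boldsymbol\upsilon$, so
\[
\det C_l(\boldsymbol\rho)=\prod_{\boldsymbol\upsilon}\lambda_{\boldsymbol\upsilon}
=\prod_{i=1}^{m}\lambda_i^{\binom{m-1}{l-1}}=(\det\boldsymbol\rho)^{\binom{m-1}{l-1}}.
\]
I anticipate no substantial obstacle. Repeated eigenvalues are accommodated uniformly by Schur's theorem without any distinctness assumption, and the only combinatorial subtlety is that lexicographic ordering is genuinely the right ordering for the triangularity of $C_l(T)$, which the zero-block argument above confirms. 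A minor alternative, if one preferred to avoid Schur, would be to triangularize over $\C$ by induction on $m$ (using the existence of at least one eigenvector); the compound-matrix computation would then be identical.
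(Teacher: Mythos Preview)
Your proof is correct and follows essentially the same route as the paper: establish multiplicativity of the compound via Binet--Cauchy, pass to a triangular matrix by similarity, and read off the eigenvalues from the diagonal of the compound of a triangular matrix. The only cosmetic differences are that the paper uses the Jordan normal form rather than Schur triangularization, and that it first proves \eqref{Eq_Sec3.5GrassmannDeterminant} separately via an $\boldsymbol{L\Delta U}$ factorization before turning to the eigenvalue statement; your version is slightly more streamlined in deducing the determinant identity directly from the spectrum.
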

\begin{proof}
Let us prove \eqref{Eq_Sec3.5GrassmannDeterminant}, up to the sign, and under the 
assumption $\det \boldsymbol \rho \not = 0$. Up to reordering the columns (or the rows) 
in $\boldsymbol \rho$ we may suppose that all principal minors $\det\boldsymbol 
\rho^{\left((1,\dots,k),(1,\dots,k)\right)}$, for $k=1,\dots,m$, are non-zero. In 
this setting one could even determine all the eigenvalues of \eqref{Eq_Sec3.5GrassmanMatrix}, 
and, as a result, obtain \eqref{Eq_Sec3.5GrassmanMatrix}; note, however, that the eigenvalues 
may change because of the permutation of the rows (or of the columns) in $\boldsymbol\rho$. 
By assumptions, there exist an upper triangular matrix $\boldsymbol U$, with $1$'s on 
its diagonal, a lower triangular matrix $\boldsymbol L$ with $1$'s on its diagonal, and 
a diagonal matrix $\boldsymbol \Delta$ with the $\lambda_j$'s on its diagonal (possibly 
up to a permutation), such that 
\[
\boldsymbol \rho = \boldsymbol L \boldsymbol \Delta \boldsymbol U. 
\]
By the Binet-Cauchy formula applied twice,
\[
\Big[ \det \boldsymbol\rho^{( \boldsymbol\mu, \boldsymbol\nu)}
		\Big]_{\substack{  \boldsymbol\mu \\ \boldsymbol\nu }}   = 
\Big[ \det \boldsymbol{L}^{( \boldsymbol\mu, \boldsymbol\tau)}
		\Big]_{\substack{ \boldsymbol\mu \\ \boldsymbol\tau }}
\Big[ \det \boldsymbol{\Delta}^{( \boldsymbol\tau, \boldsymbol\upsilon)}
		\Big]_{\substack{ \boldsymbol\tau \\ \boldsymbol\upsilon }}
\Big[ \det \boldsymbol{U}^{( \boldsymbol\upsilon, \boldsymbol\nu)}
		\Big]_{\substack{ \boldsymbol\upsilon \\ \boldsymbol\nu }},
\]
where the ranges for $\boldsymbol\mu$, $\boldsymbol\tau$, 
$\boldsymbol\upsilon$ and $\boldsymbol\nu$ are the same, and the lexicographical 
ordering is chosen at any occurrence of each multi-index. The last formula displays 
a product of three $\binom{m}{l}\times\binom{m}{l}$ matrices, namely: a lower 
triangular matrix with $1$'s on the diagonal, a diagonal matrix with the 
products $\lambda_{\boldsymbol\nu}$ on its diagonal, and an upper 
triangular matrix with $1$'s on the diagonal. Therefore 
\[
\det \Big[ \det \boldsymbol\rho^{( \boldsymbol\mu, \boldsymbol\nu)}
		\Big]_{\substack{  \boldsymbol\mu \\ \boldsymbol\nu }} =
\det \Big[ \det \boldsymbol{\Delta}^{( \boldsymbol\tau, \boldsymbol\upsilon)}
		\Big]_{\substack{ \boldsymbol\tau \\ \boldsymbol\upsilon }}
 = \left( \det \boldsymbol{\Delta} \right)^{\binom{m-1}{l-1}} 
				= \left( \det \boldsymbol\rho \right)^{\binom{m-1}{l-1}},
\]
as we claimed. In particular, if $\boldsymbol\rho$ is non-singular, then its $l$-th 
compound matrix \eqref{Eq_Sec3.5GrassmanMatrix} is also non-singular.  It could be seen 
that the products $\lambda_{\boldsymbol\nu}$, which are the eigenvalues of the $l$-th 
compound matrix of $\boldsymbol\Delta$, are also the eigenvalues of 
\eqref{Eq_Sec3.5GrassmanMatrix}, which would imply our claim in this 
special case, but we are about to prove it in general.

We now prove that $\lambda_{\boldsymbol\nu}$ 
are the eigenvalues of \eqref{Eq_Sec3.5GrassmanMatrix} without assuming neither 
$\det \boldsymbol \rho \not = 0$, nor the non-vanishing of the principal minors 
of $\boldsymbol \rho$. In $\C$ a passage to the limit would suffice, but we prefer 
an algebraic proof. Also, the abstract argument in \cite{Flanders} seemingly 
requires using the axiom of choice, which we do not require here. 

Let $\boldsymbol\sigma$ be a non-singular square matrix such that
\[
\boldsymbol\rho \boldsymbol\sigma = \boldsymbol\sigma \boldsymbol J,
\]
where $\boldsymbol J$ is Jordan's canonical form of $\boldsymbol\rho$, so 
that $\boldsymbol J = \boldsymbol \Lambda + \boldsymbol\Omega$, where 
$\boldsymbol \Lambda$ is a diagonal matrix with $\lambda_1,\dots,\lambda_m$ 
on its diagonal (up to the order), and $\boldsymbol\Omega$ is a nilpotent, 
strictly upper (according to some authors, lower), triangular matrix. Just 
as above, we have
\[
\Big[ \det \boldsymbol\rho^{( \boldsymbol\mu, \boldsymbol\tau)}
		\Big]_{\substack{  \boldsymbol\mu \\ \boldsymbol\tau }}   
\Big[ \det \boldsymbol\sigma^{( \boldsymbol\tau, \boldsymbol\nu)}
		\Big]_{\substack{ \boldsymbol\tau \\ \boldsymbol\nu }} =
\Big[ \det \boldsymbol\sigma^{( \boldsymbol\mu, \boldsymbol\upsilon)}
		\Big]_{\substack{ \boldsymbol\mu \\ \boldsymbol\upsilon }}
\Big[ \det \boldsymbol J^{( \boldsymbol\upsilon, \boldsymbol\nu)}
		\Big]_{\substack{ \boldsymbol\upsilon \\ \boldsymbol\nu }},
\] 
where 
\begin{equation}			\label{Eq_Sec3.5Jordan}
\Big[ \det \boldsymbol J^{( \boldsymbol\upsilon, \boldsymbol\nu)}
		\Big]_{\substack{ 1\leqslant\upsilon_1<\cdots<\upsilon_l\leqslant m \\ 
						 			1\leqslant\nu_1<\cdots<\nu_l\leqslant m }}
\end{equation}
is an upper triangular 
matrix with $\lambda_{\boldsymbol\nu}$ on its diagonal, and 
\[
\Big[ \det \boldsymbol\sigma^{( \boldsymbol\tau, \boldsymbol\nu)}
		\Big]_{\substack{ 1\leqslant\tau_1<\cdots<\tau_l\leqslant m \\ 
						 			1\leqslant\nu_1<\cdots<\nu_l\leqslant m }} 
\]
is a non-singular matrix by the previous argument. Thus, the eigenvalues of 
\eqref{Eq_Sec3.5GrassmanMatrix}, are the same as the eigenvalues of \eqref{Eq_Sec3.5Jordan}, 
which plainly are the products $\lambda_{\boldsymbol\nu}$, and the theorem is proved.
\end{proof}
\begin{remark}
The matrix \eqref{Eq_Sec3.5GrassmanMatrix} is called the $l$-th {\it compound matrix}, or the 
$l$-th {\it adjugate}, of $\boldsymbol\rho$. A proof of the Sylvester-Franke theorem by 
induction, and several interesting historical notes with a rich bibliography can be found 
in \cite{Price}. Further proofs are in \cite{Tornheim} and \cite{Flanders}. The second 
part of our proof has intersection with \cite{Flanders} when $\boldsymbol\rho$ is 
diagonalizable, i.e. when $\boldsymbol J$ is diagonal. The natural environment of the 
compound matrices is the exterior algebra $\Lambda^l \C^m$.
\end{remark}
\begin{remark}
Rather interestingly, the $\boldsymbol{L\Delta U}$ factorization of the Hessian 
matrix is a cornerstone of the $\C^N$-saddle point method in \cite{PinnaViola}. 
\end{remark}
\begin{remark}
If we wish to find the eigenvectors of the compound matrix \eqref{Eq_Sec3.5GrassmanMatrix}, 
assuming that we already know the eigenvectors of $\boldsymbol\rho$, which are 
(some of) the columns of $\boldsymbol\sigma$ in the above proof, then we are confronted 
with the entirely combinatorial problem of finding the Jordan normal form of the 
compound matrix of $\boldsymbol J$, which is Jordan's normal form of $\boldsymbol\rho$. 
The solution of this problem is detailed in \cite{Aitken} and \cite{Littlewood}.
\end{remark}
\subsection{Asymptotic behavior of the minors}
Pituk \cite{Pituk2002} considered Poincar\'e-Perron type difference systems
\begin{equation}			\label{Eq_Sec3.6PitukSystem}	
\boldsymbol p_{n+1} = \left[\boldsymbol A + \boldsymbol B_n \right] \boldsymbol p_n, 
\end{equation}
where $\boldsymbol p_n\in\C^m$, the matrix $\boldsymbol A\in\C^{m\times m}$ is 
independent of $n$, and the sequence of matrices $\boldsymbol B_n\in\C^{m\times m}$ 
satisfies 
\[
\lim_{n\to\infty} \| \boldsymbol B_n \| = 0.
\]
Here, $\|\cdot\|$ can be any norm on $\C^{m\times m}$. 

Putting two theorems together, we have
\begin{theorem}{(Pituk \cite{Pituk2002} \cite{Pituk2011})} 
Let $\lambda_1,\dots,\lambda_m$ be the eigenvalues of $\boldsymbol A$. If $\boldsymbol p_n$ 
is a solution of \eqref{Eq_Sec3.6PitukSystem}, then 
either $\boldsymbol p_n=\boldsymbol 0$ for any sufficiently large $n$, or 
\begin{equation}			\label{Eq_Sec3.6PitukAsymptotic}
\lim_{n\to\infty} \sqrt[n]{\|\boldsymbol p_n \|}=|\lambda_j| \qquad 
											\text{ for some } j=1,\dots,m.
\end{equation}
Furthermore, if in \eqref{Eq_Sec3.6PitukAsymptotic} 
we have $\boldsymbol p_n \in\R_{\geqslant 0}^m$, then there exists 
an eigenvector $\boldsymbol q$ of $\boldsymbol A$ such that 
$\boldsymbol A \boldsymbol q = \lambda_j \boldsymbol q$ (with the same eigenvalue 
as in \eqref{Eq_Sec3.6PitukAsymptotic}) and $\boldsymbol q\in \R_{\geqslant 0}^m$.
\end{theorem}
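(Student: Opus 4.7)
The plan is to treat the two assertions separately: the growth-rate statement by a perturbation argument reducing to the constant-coefficient recurrence, and the existence of a non-negative eigenvector by a compactness-plus-fixed-point argument in the cone $\R_{\geqslant 0}^m$.

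For the growth-rate statement, I would first establish the upper bound $\limsup_n\sqrt[n]{\|\boldsymbol p_n\|}\leqslant \rho(\boldsymbol A)$, where $\rho(\boldsymbol A)=\max_j|\lambda_j|$. Given $\varepsilon>0$, a suitable rescaling of a basis putting $\boldsymbol A$ in Jordan form yields a norm $\|\cdot\|_\varepsilon$ on $\C^m$ whose induced operator norm satisfies $\|\boldsymbol A\|_\varepsilon\leqslant \rho(\boldsymbol A)+\varepsilon$. Since $\|\boldsymbol B_n\|\to 0$, for all sufficiently large $n$ we have $\|\boldsymbol A+\boldsymbol B_n\|_\varepsilon\leqslant \rho(\boldsymbol A)+2\varepsilon$, and iterating \eqref{Eq_Sec3.6PitukSystem} produces the asserted upper bound. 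Upgrading this $\limsup$ to an actual limit equal to some $|\lambda_j|$ is the main obstacle; I would proceed by induction on the number $r$ of distinct moduli $|\lambda^{(1)}|>\cdots>|\lambda^{(r)}|$ among the eigenvalues. Decomposing $\C^m=V_1\oplus\cdots\oplus V_r$ into $\boldsymbol A$-invariant generalized-eigenspace layers grouped by modulus, and splitting $\boldsymbol p_n=\boldsymbol p_n^{(1)}+\cdots+\boldsymbol p_n^{(r)}$, one encounters a dichotomy. Either the top component $\boldsymbol p_n^{(1)}$ is not asymptotically negligible, in which case a matching lower bound (extracting a subsequence on which the $V_1$-projection grows proportionally to $|\lambda^{(1)}|^n$) forces $\sqrt[n]{\|\boldsymbol p_n\|}\to |\lambda^{(1)}|$; or along some subsequence one can discard $\boldsymbol p_n^{(1)}$, absorb the $V_1$-to-$V_{\geqslant 2}$ coupling induced by $\boldsymbol B_n$ into a new perturbation that still tends to zero, and reduce the problem to a Poincar\'e-Perron system on $V_2\oplus\cdots\oplus V_r$, to which the inductive hypothesis applies. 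The quantitative control of the leakage between layers is the delicate point of the induction step.

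For the second part, I set $\boldsymbol u_n=\boldsymbol p_n/\|\boldsymbol p_n\|$, which lies in the compact set $\Sigma=\{\boldsymbol v\in\R_{\geqslant 0}^m:\|\boldsymbol v\|=1\}$ whenever $\boldsymbol p_n\neq \boldsymbol 0$. From the recurrence and $\|\boldsymbol B_n\|\to 0$, the dynamics of $\boldsymbol u_n$ on $\Sigma$ is governed asymptotically by the rational map $T(\boldsymbol v)=\boldsymbol A\boldsymbol v/\|\boldsymbol A\boldsymbol v\|$, and by part 1 the scaling factor $\|\boldsymbol A\boldsymbol u_n\|$ tends to $|\lambda_j|$ along the orbit (so that, in particular, $T$ is well-defined on a neighbourhood of the $\omega$-limit set of $(\boldsymbol u_n)$). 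The $\omega$-limit set $\Omega$ is a non-empty, compact, asymptotically $T$-invariant subset of $\Sigma\cap\R_{\geqslant 0}^m$. Taking the closed convex hull $K\subset\Sigma$ of a suitable $T$-invariant piece of $\Omega$ and applying Brouwer's fixed point theorem to the continuous self-map $T$ of $K$ yields a fixed point $\boldsymbol q\in K$, so that $\boldsymbol A\boldsymbol q=\|\boldsymbol A\boldsymbol q\|\boldsymbol q$ with $\boldsymbol q\in\R_{\geqslant 0}^m\setminus\{\boldsymbol 0\}$ and with eigenvalue of modulus $|\lambda_j|$; the identification of this eigenvalue with $\lambda_j$ follows from the fact that a non-negative eigenvector forces its eigenvalue to be real and non-negative, hence equal to $|\lambda_j|=\lambda_j$. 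I expect the main technical difficulty here to be verifying the convexity/invariance of $K$ (so that Brouwer truly applies) and ensuring that the fixed point lies in the interior of the cone, not in a degenerate face on which $\boldsymbol A\boldsymbol v$ might vanish — a scenario already ruled out by part 1, since otherwise $\boldsymbol p_n$ would collapse faster than $|\lambda_j|^n$.
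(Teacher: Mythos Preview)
The paper does not supply its own proof of this theorem: it is quoted as a result of Pituk, with citations to \cite{Pituk2002} and \cite{Pituk2011}, and the surrounding text only discusses its consequences. There is therefore nothing in the paper to compare your argument against; if you want to check your sketch, you should look at Pituk's original papers.

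That said, two points in your proposal deserve caution. In the induction step for the first part, ``discarding $\boldsymbol p_n^{(1)}$ along a subsequence'' and ``absorbing the coupling into a new perturbation that still tends to zero'' is exactly where the real work lies: the projections $\boldsymbol p_n^{(k)}$ do \emph{not} individually satisfy a Poincar\'e--Perron system, and the cross terms coming from $\boldsymbol B_n$ are only small relative to $\|\boldsymbol p_n\|$, not relative to the lower-layer components; one needs a genuine dichotomy/ordinary-dichotomy argument (as in Pituk's paper) rather than a bare induction. In the second part, your Brouwer argument tacitly assumes that $T(\boldsymbol v)=\boldsymbol A\boldsymbol v/\|\boldsymbol A\boldsymbol v\|$ maps the simplex $\Sigma\subset\R_{\geqslant 0}^m$ into itself, but nothing in the hypotheses forces $\boldsymbol A$ to preserve the non-negative cone; the positivity is only assumed of the particular trajectory $\boldsymbol p_n$, not of $\boldsymbol A$. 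Likewise, the convex hull of a $T$-invariant set is in general not $T$-invariant (since $T$ is not affine), so Brouwer does not apply directly. Pituk's 2011 argument proceeds differently, extracting the eigenvector as a limit point of the normalized trajectory itself and using the asymptotic relation $\boldsymbol p_{n+1}\sim \boldsymbol A\boldsymbol p_n$ together with the already-established limit $\|\boldsymbol p_{n+1}\|/\|\boldsymbol p_n\|\to|\lambda_j|$.
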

Again, in \eqref{Eq_Sec3.6PitukAsymptotic} we can take any norm on $\C^m$. The limit 
equation we reported about in Sect. 3.2 above was obtained by choosing the $\ell_1$-norm 
in $\C^m$, $\boldsymbol A = \boldsymbol \Psi$ and $\boldsymbol B_n = \boldsymbol \Psi_n 
- \boldsymbol \Psi$, where $\boldsymbol \Psi_n$ and $\boldsymbol \Psi$ are the companion 
matrices in Sects. 3.2-3.3. 

The quoted theorem by Pituk require a very weak assumption on the sequence $\boldsymbol B_n$ 
and essentially no assumption on the matrix $\boldsymbol A$, which is very remarkable 
in comparison with previous results by Perron, M\'at\'e and Nevai, Coffman, Li, Trench, 
Pituk himself and other authors.

Combining \eqref{Eq_Sec3.6PitukAsymptotic} and \eqref{Eq_Sec3.5GrassmannDeterminant}, 
we get the following
\begin{theorem}
Let $x_n^{(1)},\dots,x_n^{(l)}$ be linearly independent solutions 
of \eqref{Eq_Sec3.1=0}. Suppose that \eqref{Eq_Sec3.1lim} holds, and 
let $\lambda_1,\dots,\lambda_m$ be the eigenvalues of the matrix $\boldsymbol \Psi$
in \eqref{Eq_Sec3.4companionMatrix}. Let 
\[
\boldsymbol x_n=\left[x_{n+i-1}^{(j)}\right]_{\substack{i=1,\dots,m \\ j=1,\dots,l}}.
\] 
Then
\[
\lim_{n\to\infty} \sqrt[n]{\Big\| \left[ \det\boldsymbol x_n^{(\boldsymbol\mu,\textbf{-})}
		\right]_{\substack{\boldsymbol\mu \\ \textbf{ - }}} 
		\Big\|}=|\lambda_{\nu_1}\cdots\lambda_{\nu_l}| 
				\qquad \text{ for some } 1\leqslant\nu_1<\cdots<\nu_l\leqslant m.
\]
\end{theorem}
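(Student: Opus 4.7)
The plan is to bundle the $l\times l$ minors of $\boldsymbol x_n$ into a single vector and reduce the claim to a direct application of Pituk's theorem combined with the Sylvester--Franke theorem proved in \S 3.5. Set
\[
\boldsymbol y_n \;=\; \left[\det \boldsymbol x_n^{(\boldsymbol\mu,\textbf{-})}\right]_{\boldsymbol\mu}\in \C^{\binom{m}{l}},
\]
indexed with a fixed (say: the lexicographic) ordering by the $l$-tuples $1\leqslant\mu_1<\cdots<\mu_l\leqslant m$. Since $\boldsymbol x_{n+1}=\boldsymbol\Psi_n\,\boldsymbol x_n$, the Binet--Cauchy formula gives
\[
\boldsymbol y_{n+1}\;=\;\boldsymbol C_n\,\boldsymbol y_n,\qquad \boldsymbol C_n\;:=\;\left[\det \boldsymbol\Psi_n^{(\boldsymbol\mu,\boldsymbol\nu)}\right]_{\boldsymbol\mu,\boldsymbol\nu},
\]
and the entries of $\boldsymbol C_n$ being polynomials in the entries of $\boldsymbol\Psi_n$, by \eqref{Eq_Sec3.1lim} one has $\boldsymbol C_n\to\boldsymbol C$, where $\boldsymbol C$ is the $l$-th compound of the limit companion matrix $\boldsymbol\Psi$.

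Next I would rule out the degenerate branch of Pituk's dichotomy. If $\boldsymbol y_n=\boldsymbol 0$ at some $n$, then the $m\times l$ matrix $\boldsymbol x_n$ has rank strictly less than $l$, so its columns (as vectors of $\C^m$) admit a non-trivial linear relation $\sum_j c_j x_{n+i-1}^{(j)}=0$ for $i=1,\dots,m$. Because the standing assumption $\alpha_n^{(0)}\alpha_n^{(m)}\neq 0$ allows \eqref{Eq_Sec3.1=0} to be solved uniquely both forward and backward from any $m$ consecutive terms, the sequence $k\mapsto\sum_j c_j x_k^{(j)}$, being a solution of \eqref{Eq_Sec3.1=0} vanishing on $m$ consecutive indices, must vanish identically. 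This contradicts the linear independence of $x_n^{(1)},\dots,x_n^{(l)}$; hence $\boldsymbol y_n\neq\boldsymbol 0$ for every $n$.

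Now I apply Pituk's theorem in dimension $\binom{m}{l}$ to $\boldsymbol y_{n+1}=\bigl(\boldsymbol C+(\boldsymbol C_n-\boldsymbol C)\bigr)\boldsymbol y_n$: since $\boldsymbol y_n$ is never zero, there is an eigenvalue $\mu$ of $\boldsymbol C$ with $\sqrt[n]{\|\boldsymbol y_n\|}\to|\mu|$. By Sylvester--Franke, the eigenvalues of $\boldsymbol C$ are precisely the products $\lambda_{\upsilon_1}\cdots\lambda_{\upsilon_l}$ over $1\leqslant\upsilon_1<\cdots<\upsilon_l\leqslant m$, so $|\mu|=|\lambda_{\nu_1}\cdots\lambda_{\nu_l}|$ for a suitable multi-index $\boldsymbol\nu$, which is the claim. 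The main obstacle in this proof is genuinely the non-degeneracy step: once $\boldsymbol y_n$ is known not to vanish eventually, the eigenvalue identification is purely algebraic (and already carried out in \S 3.5), whereas translating the abstract linear independence of the scalar sequences $x_n^{(j)}$ into non-vanishing of the composite vector $\boldsymbol y_n$ is exactly where the invertibility of the shift, i.e.\ $\alpha_n^{(0)}\neq 0$, is essential.
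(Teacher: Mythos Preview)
Your proof is correct and follows essentially the same route as the paper: form the compound vector $\boldsymbol y_n$, observe it satisfies a Poincar\'e--Perron system with coefficient matrix the $l$-th compound of $\boldsymbol\Psi_n$, apply Pituk's theorem, and identify the eigenvalues via Sylvester--Franke. The only difference is in the non-degeneracy step: the paper invokes the classical Casorati criterion recalled in \S 3.1 (linear independence of $l$ sequences is equivalent to the $l\times l$ Casoratian being nonzero for infinitely many $n$), which already suffices to exclude the eventually-zero branch of Pituk's dichotomy, whereas you give a direct argument using forward/backward propagation under $\alpha_n^{(0)}\alpha_n^{(m)}\neq 0$ to obtain the stronger conclusion $\boldsymbol y_n\neq\boldsymbol 0$ for \emph{every} $n$; both are fine.
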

\begin{proof}
By Sect. 3.1, our assumption on $x_n^{(1)},\dots,x_n^{(l)}$ imply that
\[
\det\left[ x_{n+i-1}^{(j)} \right]_{\substack{i=1,\dots,l \\ j=1,\dots,l}}\not=0 
\quad \text{ for infinitely many } n.
\]
We may apply \eqref{Eq_Sec3.6PitukAsymptotic} to the system 
\[
\left[ \det \boldsymbol x_{n+1}^{(\boldsymbol\mu,\textbf{-})}
		\right]_{\substack{\boldsymbol\mu \\ \textbf{-}}} = 
\left[ \det \boldsymbol \Psi_n^{(\boldsymbol\mu,\boldsymbol\nu)}
		\right]_{\substack{\boldsymbol\mu \\ \boldsymbol\nu}}
\left[ \det \boldsymbol x_n^{(\boldsymbol\mu,\textbf{-})}
		\right]_{\substack{\boldsymbol\mu \\ \textbf{-}}},
\]
because
\[
\left[ \det \boldsymbol \Psi_n^{(\boldsymbol\mu,\boldsymbol\nu)}
		\right]_{\substack{\boldsymbol\mu \\ \boldsymbol\nu}} = 
\left[ \det \boldsymbol \Psi^{(\boldsymbol\mu,\boldsymbol\nu)}
		\right]_{\substack{\boldsymbol\mu \\ \boldsymbol\nu}} +
\left[ \det \boldsymbol \Psi_n^{(\boldsymbol\mu,\boldsymbol\nu)} 
			- \det \boldsymbol \Psi^{(\boldsymbol\mu,\boldsymbol\nu)}
		\right]_{\substack{\boldsymbol\mu \\ \boldsymbol\nu}}
\]
and 
\[
\lim_{n\to\infty} \Big\| \left[ \det \boldsymbol \Psi_n^{(\boldsymbol\mu,\boldsymbol\nu)} 
			- \det \boldsymbol \Psi^{(\boldsymbol\mu,\boldsymbol\nu)}
		\right]_{\substack{\boldsymbol\mu \\ \boldsymbol\nu}}
\Big\| = 0.
\]
By the Sylvester-Franke theorem, the eigenvalues of 
\[
\left[ \det \boldsymbol \Psi^{(\boldsymbol\mu,\boldsymbol\nu)}
		\right]_{\substack{\boldsymbol\mu \\ \boldsymbol\nu}}
\]
are precisely the products $\lambda_{\nu_1}\cdots\lambda_{\nu_l}$, 
for $1\leqslant\nu_1<\cdots<\nu_l\leqslant m$. 
\end{proof}
\begin{remark}
The above result can be made more precise, using the Jordan normal form of the compound 
matrix of $\boldsymbol\Psi$, and we refer the reader to our previous remark 3.7.
\end{remark}
\section{Some applications of our criterion}
In this section we outline a concrete application of our criterion on two examples. The 
exposition that follows is a bit sketchy, for two reasons. The first one is that we want 
to keep the focus of the paper on the criterion itself, and the examples below are 
merely illustrative. The second reason is that we do not try here to optimize 
the parameters in the first example, see the end of subsection 4.1, nor we put special care 
in the general upper bound for the linear forms, see below. Thus, the experimental 
results we present here are very likely improvable; in the first example, with the help 
of the refined criterion, see Theorem 2.4 above, combined with the so-called permutation-group method; in the second example, with a clever estimate of the linear forms.

Let $\alpha_1,\dots,\alpha_m\in\C$ be distinct, and let $k,m,n\in\N$ with $k\geqslant1$. 
The $k$-th polylogarithm of $z$ is defined for $z\in\C$ with $|z|<1$, by
\[
{\rm Li}_k(z) = \sum_{l=1}^\infty \frac{z^l}{l^k}.
\]
We put, recursively,
\[
U_0(z) = \prod_{i=1}^m (z+\alpha_i)^{kn}, \quad 
U_j(z) = \frac{1}{n!} \frac{{\rm d}^n}{{\rm d} z^n} \big( z^n U_{j-1}(z) \big)\; (j=1,\dots,k).
\]
The polynomials $U_j(z)$ have degree $kmn$. Let $V_0(z)=z^{kmn} U_k(1/z)$. There 
exist $km$ polynomials $W_{i,j}(z)$ with $i=1,\dots,m$ and $j=1,\dots,k$ having degree 
not exceeding $kmn$ such that 
\begin{equation}			\label{Eq_Sec4PadePolysPoints}
V_0(z) {\rm Li}_j(-\alpha_i z) - W_{i,j}(z) = O(z^{kmn+n+1}) \qquad (z\to 0). 
\end{equation}
In other words, $V_0(z),W_{1,1}(z),\dots,W_{1,k},\dots,W_{m,1},\dots,W_{m,k}$ is a system of 
$(n,\dots,n)$ type II Pad\'e approximations 
to $1,{\rm Li}_1(-\alpha_1 z),\dots,{\rm Li}_k(-\alpha_1 z),\dots,
{\rm Li}_1(-\alpha_m z),\dots,{\rm Li}_k(-\alpha_m z)$ at $z=0$. This is a special 
case of a more general analytic construction introduced in \cite{DavidHirata-KohnoKawashima}. 
The case $k=1$ was introduced in \cite{Nikishin} and \cite{RhinToffin}, and the case $m=1$ 
was introduced, in the more general context of the Lerch functions, in 
\cite{Hata_Polylogarithms}. In particular, the above statement is a special 
case of \cite[Theorem 3.6]{DavidHirata-KohnoKawashima}. Moreover, the construction 
can be slighly twisted to obtain a $(km+1)\times (km+1)$ square matrix of polynomials 
whose determinant does not vanishes for any $n$, and is independent of $z$: 
see \cite[Proposition 5.1]{DavidHirata-KohnoKawashima}.

We shall need the following more explicit expressions for the polynomials $V_0$ and $W_{i,j}$:
\begin{align*}
V_0(z) & = \sum_{l_1 = 0}^{kn} \cdots \sum_{l_m = 0}^{kn} 
		\binom{n+l_1+\cdots+l_m}{n}^k \binom{kn}{l_1} \cdots \binom{kn}{l_m} 
		\alpha_1^{kn-l_1} \cdots \alpha_m^{kn-l_m} z^{kmn-l_1-\cdots-l_m} 	\\
W_{i,j}(z) & = \sum_{l_1 = 0}^{kn} \cdots \sum_{l_m = 0}^{kn} 
		\binom{n+l_1+\cdots+l_m}{n}^k \binom{kn}{l_1} \cdots \binom{kn}{l_m} 
		\alpha_1^{kn-l_1} \cdots \alpha_m^{kn-l_m}  \\
	&   \times
	\sum_{s=1}^{l_1+\cdots+l_m} \frac{(-1)^{s-1}}{s^j} \alpha_i^s z^{s+kmn-l_1-\cdots-l_m} 
	\qquad (i=1,\dots,m;j=1,\dots,k).
\end{align*}
Taking into account \eqref{Eq_Sec4PadePolysPoints}, if $|\alpha_i|<1$ for $i=1,\dots,m$, then
\[
|V_0(1) {\rm Li}_j(-\alpha_i) - W_{i,j}(1)| \leqslant 
{\rm Li}_j(|\alpha_i|) |\alpha_i|^{(km+1)n}
\max_{l=0,\dots,kn} \binom{n+ml}{n}^k \binom{kn}{l}^m.
\]

For a generalization of the main result of \cite{Marcovecchio2006} to values of the 
polylogarithm at algebraic points outside the unit disc, we refer the reader to the 
recent paper \cite{FischlerRivoal_Gfunctions}. That requires much deeper arguments, which we do 
not tackle here.

From
\[
\binom{n+ml}{n}^k \binom{kn}{l}^m \leqslant \binom{n+kmn}{n}^k \binom{kn}{[\frac{kn}{2}]}^m
\]
we infer that
\begin{multline*}
\log |V_0(1) {\rm Li}_j(-\alpha_i) - W_{i,j}(1)| \\ =
(km+1)n\log |\alpha_i| +\big((km+1)\log(km+1) - km \log(km) + m \log 2)kn + O(\log n),
\end{multline*}
for $n\to\infty$. This will be used in the second example.
\subsection{First example}
In the first example we focus on the linear independence 
of 
\[
1,{\rm Li}_1(1/q),{\rm Li}_2(1/q),{\rm Li}_1(2/q),{\rm Li}_2(2/q)
\] 
over $\Q$ for all sufficiently large \emph{positive} integers $q$. Let $\alpha=-1/q$ and $\beta=-2/q$, and let
\[
u(z;n) = \frac{1}{n!} \frac{{\rm d}^n}{{\rm d} z^n} \left( 
\frac{z^n}{n!} \frac{{\rm d}^n}{{\rm d} z^n} \big( z^n (z+\alpha)^{2n} (z+\beta)^{2n}\big) \right),
\]
\[
v_1(z;n) = \alpha \int\limits_0^1 \frac{u(z;n)-u(-\alpha t;n)}{z+\alpha t}\, {\rm d} t, \quad
v_2(z;n) = \alpha \int\limits_0^1 \frac{u(z;n)-u(-\alpha t;n)}{z+\alpha t}\,
																		\log t\, {\rm d} t, 
\]
\[
w_1(z;n) = \beta \int\limits_0^1 \frac{u(z;n)-u(-\beta t;n)}{z+\beta t}\, {\rm d} t, \quad
w_2(z;n) = \beta \int\limits_0^1 \frac{u(z;n)-u(-\beta t;n)}{z+\beta t}\,
																		\log t\, {\rm d} t; 
\]
see \cite[p.285]{RhinToffin} and \cite[p.375]{Hata_Dilogarithm} for similar formulas. 
Roughly, when $q$ is large the linear forms 
\begin{equation}			\label{Eq_Sec4.1linearforms}
\begin{split}
u(1;n){\rm Li}_1(1/q)-v_1(1;n),\qquad & u(1;n){\rm Li}_2(1/q)-v_2(1;n), 		\\
u(1;n){\rm Li}_1(2/q)-w_1(1;n),\qquad & u(1;n){\rm Li}_2(2/q)-w_2(1;n)
\end{split}
\end{equation}
are small. However, the coefficients $u(1;n),v_1(1;n), v_2(1;n), w_1(1;n), w_2(1;n)$ 
are rational. It is sufficient to multiply each of them by 
\[
q^{4n} d_{4n}^2,
\]
where $d_n$ is the least common multiple of the integers $1,\dots,n$, to obtain 
approximations with integer coefficients. More precisely, our refined criterion, i.e. 
Theorem 2.4 above, is less demanding, but in this first attempt we prefer to skip 
these nuances. Computations show that using just the same sequences 
of approximations we experimentally find out that the five numbers 
$1,{\rm Li}_1(1/q),{\rm Li}_2(1/q),{\rm Li}_1(2/q),{\rm Li}_2(2/q)$ are linearly independent 
over $\Q$ for all $q\geqslant 1323$, and that four out of the same five numbers are linearly 
independent over $\Q$ for all $q\geqslant 1287$. The improvement on the range for $q$ in the 
second case depends on the asymptotic behavior of the $2\times 2$ determinants of linear forms, 
as showed in Theorem 3.3 above. For comparison, we recall 
that $1,{\rm Li}_1(1/q),{\rm Li}_2(1/q)$ are known to be linear independent over $\Q$ 
for $q\geqslant 6$, and $1,{\rm Li}_1(2/q),{\rm Li}_2(2/q)$ are linear independent 
over $\Q$ for $q\geqslant 51$, see \cite[p.94]{RhinViola2019}. 

To achieve our plan rigorously, one should compute a linear recurrence relation satisfied 
by all the coefficients $u(1;n),v_1(1;n), v_2(1;n), w_1(1;n), w_2(1;n)$. According to the 
experimental style of this section, we proceed differently. The coefficient $u(1;n)$ can 
be easily written as a double Cauchy integral, so that 
\begin{equation}			\label{Eq_Sec4.1limlogun}
\lim_{n\to\infty} \frac{1}{n} \log |u(1;n)| 
\end{equation}
depends on the critical values of the function
\[
f(s,t)=\frac{s(s-1/q)^2 (s-2/q)^2 t}{(s-t)(t-1)}. 
\]
Solving 
\[
\frac{\partial }{\partial t} f(s,t) = 0,
\]
we obtain $s=t^2$. If we were using the $\C^2$ saddle point method \cite{Hata_Saddle}, 
we would look for the critical values of $f(t^2,t)=\big(g(t)\big)^2$, where
\[
g(t) = \frac{t(t^2-1/q)(t^2-2/q)}{t-1}.
\] 
If we solve $g^\prime(t)=0$, i.e. we find the roots $t_1,\dots,t_5$ of 
\[
4t^5-5 t^4 -\frac{6}{q} t^3 + \frac{9}{q} t^2 -\frac{2}{q^2} = 0,
\]
and compute 
\[
\log |t_i| + \log |t_i^2 - 1/q| + \log |t_i^2 - 2/q| - \log |t_i-1| + 2 \log q + 4, 
\]
for $i=1,\dots,5$, the maximum of those values, doubled, is an 
upper bound for \eqref{Eq_Sec4.1limlogun}, and the second maximum, 
doubled, is an upper bound for the linear forms
\[
\begin{split}
q^{4n}d_{4n}^2 u(1;n){\rm Li}_1(1/q)-q^{4n}d_{4n}^2 v_1(1;n),\qquad & 
q^{4n}d_{4n}^2 u(1;n){\rm Li}_2(1/q)-q^{4n}d_{4n}^2 v_2(2;n), 				\\
q^{4n}d_{4n}^2 u(1;n){\rm Li}_1(2/q)-q^{4n}d_{4n}^2 w_1(1;n),\qquad & 
q^{4n}d_{4n}^2 u(1;n){\rm Li}_2(2/q)-q^{4n}d_{4n}^2 w_2(1;n). 
\end{split}
\] 
Suppose that the roots $t_1,\dots,t_5$ are ordered in such a way that 
\[
|g(t_1)|>|g(t_2)|>|g(t_3)|>|g(t_4)|>|g(t_5)|. 
\]
Note that $|g(t_2)|=O(|q|^{-\frac{5}{2}})$ for $q\to\infty$ 
by \eqref{Eq_Sec4PadePolysPoints}. The five numbers 
\[
1,{\rm Li}_1(1/q),{\rm Li}_2(1/q),{\rm Li}_1(2/q),{\rm Li}_2(2/q)
\]
are linearly independent over $\Q$ if 
\[
\log |g(t_2)| + 2 \log q + 4 < 0,
\]
and at least four among the above five numbers are linearly independent over $\Q$ if 
\[
\log |g(t_2)| + \log|g(t_3)| + 4 \log q + 8 < 0. 
\]
This explains the difference in the ranges for $q$ in the two cases. 

It is not very difficult to turn the above heuristic argument into a fully rigorous one. 
Let us outline how to do this. First of all, we need a characterization of the polynomials 
$u(z;n)$ in terms of certain orthogonality conditions. This means that
\[
\int\limits_0^1 t^l u(-\gamma t;n) (\log t)^j \, {\rm d} t = 0 \quad 
\gamma=\alpha,\beta;\; j=0,1;\; l=0,\dots,n-1,
\]
and that any polynomial $U(z)$ in $z$ of degree not exceeding $4n-1$ and satisfying 
\[
\int\limits_0^1 t^l U(-\gamma t) (\log t)^j \, {\rm d} t = 0 \quad 
\gamma=\alpha,\beta;\; j=0,1;\; l=0,\dots,n-1,
\]
must be identically zero. Secondly, we can find five  polynomials $A_0(z;n),\dots,A_4(z;n)$ 
in $z$ and $n$, not all zero, such that 
\[
\deg_z A_0(z;n)\leqslant 7,\; \deg_z A_1(z;n)\leqslant 11,\; \deg_z A_2(z;n)\leqslant 10,\; 
\deg_z A_3(z;n)\leqslant 9,\; \deg_z A_4(z;n)\leqslant 8,\; 
\]
and that 
\[
\deg_z \left(\sum_{j=0}^4 A_j(z;n) u(z;n-j) \right) < 4(n-12).
\]
By the orthogonality conditions above, we get
\[
\sum_{j=0}^4 A_j(z;n) u(z;n-j) = 0.
\]
After dividing by a suitable power of $n$, this is a Poincar\'e-Perron-Pituk-type recurrence, 
and we may apply our results in Sects. 2 and 3. Using the orthogonality conditions again, it 
is easy to see that the polynomials $v_1(z;n),v_2(z;n),w_1(z;n)$ and $w_2(z;n)$ satisfy the 
same recurrence relation as $u(z;n)$. Moreover, instead of actually computing the recurrence, 
maybe with the help of the algorithm in \cite{WilfZeilberger} implemented in some computer 
algebra system, one can also use the explicit form 
\[
u(z;n)= \sum_{p=0}^{2n} \sum_{q=0}^{2n} \binom{2n}{p} \binom{2n}{q} \binom{5n-p-q}{n}^2 
									\alpha^p \beta^q z^{4n-p-q}
\]
to obtain explicitly the {\it limit} equation, i.e. \eqref{Eq_Sec3.4polynomial}, 
of the recurrence, like, e.g., in \cite[Theorem 5.1]{Marcovecchio2014}. We understand 
that this is just a sketch, but that was all what we promised. 

We do not consider the sets of numbers $1,{\rm Li}_1(\alpha),{\rm Li}_2(\alpha),
{\rm Li}_1(\beta),{\rm Li}_2(\beta)$ when $(\alpha,\beta)$ is one of 
\[
(1/q,2/q), \quad (1/q,-2/q), \quad (-1/q,2/q),
\] 
because we experimentally found that in each of these cases $g(t_2)$ and $g(t_3)$ happen 
to be complex conjugate solutions of a polynomial of degree $5$, 
therefore $|g(t_2)|=|g(t_3)|$, so that our criterion would not have an interesting 
application. On the other hand, a way to circumvent this difficulty could be the use 
of approximations more general than those considered above, obtained, e.g. changing 
$u(z;n)$ into 
\[
\frac{z^{-q_2 n}}{((p_2-q_2) n)!} \frac{{\rm d}^{(p_2-q_2)n}}{{\rm d} z^{(p_2-q_2)n}} \; 
\frac{z^{(p_2-q_1)}}{((p_1-q_1)n)!} \frac{{\rm d}^{(p_1-q_1)n}}{{\rm d} z^{(p_1-q_1)n}} 
\big( z^{p_1 n} (z+\alpha)^{rn} (z+\beta)^{sn}\big).
\]
This will be the subject of some future paper. We also remark that there is another strategy 
that is totally independent of consideration of linear recurrence sequences. The linear 
forms \eqref{Eq_Sec4.1linearforms} can be written as double integrals, as to the two on 
the left of \eqref{Eq_Sec4.1linearforms}, or sums of double integrals, as to the two on 
the right of \eqref{Eq_Sec4.1linearforms}, by using the orthogonality properties of $u(z;n)$ 
like in the papers \cite{RhinToffin} and \cite{Hata_Dilogarithm}, and then applying 
the $\C^2$--saddle method. As to the upper bound, in the $2\times 2$ determinant one 
can get ride of one of the four double integrals involved, thus carrying the 
value $|g(t_3)|$ into play. For the non-vanishing assumption, it could be handled 
through a lower bound of the determinant, again using the $\C^2$--saddle method.
\subsection{Second example}
This example aims to illustrate the refined criterion, i.e. Theorem 2.4. To keep the 
exposition as simple as possible, we now disregard the contribution that comes from the 
application of Theorem 3.3, and just use the trivial estimation of the determinants, i.e 
Hadamard's inequality. Let us consider the $km+1$ numbers
\begin{equation}			\label{Eq_Sec4.2km+1Lik}
1,\{{\rm Li}_1(1/lq),\dots,{\rm Li}_k(1/lq) \; |\; l=1,\dots,m\},
\end{equation}
where $q$ is a sufficiently large integer, positive or negative. With the 
notation at the beginning of the section, we set $\alpha_l=1/lq$. A special case of \cite[Theorem 2.1]{DavidHirata-KohnoKawashima} is the following: the $1+km$ numbers 
above are linearly independent over $\Q$ whenever
\begin{equation}			\label{eq_Sec4.2dimensionDHK}
\log |q|>km\big(k + \log d_m + k\log (5/2) \big) + k\log 3.
\end{equation}
We have 
\[
d_m^{kn} U(1)\in \Z[1/q],\qquad d_m^{kmn} d_{kmn}^j W_{i,j}(1)\in\Z[1/q] \quad
															(j=1,\dots,k;i=1,\dots,m),
\] 
and
\[
\lim_{n\to\infty} \frac{1}{n} \log (d_m^{kmn} d_{kmn}^k) = km\log d_m + k^2 m.
\]
Combining
\[
q^{kmn} d_m^{kn} U(1)\in \Z,\qquad q^{kmn} d_m^{kmn} d_{kmn}^j W_{i,j}(1)\in\Z \quad
															(j=1,\dots,k;i=1,\dots,m)
\] 
with the above lower bounds for the linear forms, and using the non-vanishing result 
in \cite[Proposition 5.1]{DavidHirata-KohnoKawashima}, we see that the $km+1$ numbers 
\eqref{Eq_Sec4.2km+1Lik} are linear independent over $\Q$ if
\[
\log|q|> k^2 m +km \log (2d_m) +k (km+1)\log(km+1)-k^2 m\log(km),
\]
and, {\it a fortiori}, if
\[
\log|q|>k^2 m +km \log (2d_m) +k\log(km+1) +k,
\]
which improves upon \eqref{eq_Sec4.2dimensionDHK}. Numerically, for $k=m=10$ we 
require $\log|q|\geqslant 1909$, and for $k=m=11$ we need $\log |q|\geqslant 2717$; 
compare with \cite[Example 6.1]{DavidHirata-KohnoKawashima}.

Using our Theorem 2.4, we can prove a lower bound for the dimension of the vector space 
spanned over $\Q$ by the above numbers, say $\delta(k,m)$, for a wider range for $q$.
Looking more closely at the definition of the polynomials $W_{i,j}(z)$, we have
\[
d_m^{(k-1)mn} d_{kmn}^j W_{i,j}(1)\in\Z[1/q]\; \text{ if } j\leqslant k-1, \quad 
d_{m-1}^{k(m-1)n} d_m^{kn} d_{kmn}^j W_{i,j}(1)\in\Z[1/q]\; \text{ if } i\leqslant m-1.
\]
Thus
\[
d_m^{(k-1)mn} d_{m-1}^{mn} d_{kmn}^k W_{i,j}(1)\in\Z[1/q]\; \text{ if } (i,j)\not=(m,k).
\]
Similarly, we have:
\begin{itemize}

\item $\delta(k,m)\geqslant km$ if $k,m\geqslant 2$ and
\[
2\log|q|>2k^2 m +(2k-1)m \log d_m +m\log d_{m-1} +2km\log 2 +2k\log(km+1) +2k;
\]

\item $\delta(k,m)\geqslant km-1$ if $k,m\geqslant 3$ and
\[
3\log|q|>3k^2 m +(3k-2)m \log d_m +2m\log d_{m-1} +3km\log 2 +3k\log(km+1) +3k.
\]
\end{itemize}
Now let $k,m\geqslant 4$. Refining again the above denominator estimate, we remark that
\[
d_m^{(k-2)mn} d_{m-1}^{mn} d_{m-2}^{mn} d_{kmn}^k W_{i,j}(1)\in\Z[1/q] 
		\; \text{ if } (i,j)\not=(m,k),(m-1,k),(m,k-1).
\]
Therefore 
\begin{itemize}

\item
$\delta(k,m)\geqslant km-2$ if $k,m\geqslant 4$ and
\[
4\log|q|>4k^2 m +(4k-3)m \log d_m +2m\log d_{m-1} +m\log d_{m-2} +4km\log 2 +4k\log(km+1) +4k;
\]

\item $\delta(k,m)\geqslant km-3$ if $k,m\geqslant 5$ and
\[
5\log|q|>5k^2 m +(5k-4)m \log d_m +2m\log d_{m-1} +2m\log d_{m-2} +5km\log 2 +5k\log(km+1) +5k;
\]

\item $\delta(k,m)\geqslant km-4$ if $k,m\geqslant 6$ and
\[
6\log|q|>6k^2 m +(6k-5)m \log d_m +2m\log d_{m-1} +3m\log d_{m-2} +6km\log 2 +6k\log(km+1) +6k.
\]
\end{itemize}
The argument can be continued as far as $k$ and $m$ are large enough, and each time 
we can cut a larger triangular corner from the range for $i$ and $j$, and get a better, 
i.e. smaller, denominator outside that corner.

Numerically, $\delta(11,11)\geqslant 1+121-(1+2+3+4)=112$ if
\[
10 \log |q|>10 11^3 + (110-9)10\log(d_{11}) + 55 \log(d_{10}) + 44(\log d_8) + 1210 \log 2 
+ 110 \log(122) + 110, 
\]
i.e. $|q|>e^{2585}$.

\begin{remark}
Further examples can be obtained, e.g., with the set of numbers
\[
1,\{{\rm Li}_1(1/p^{l-1}q),\dots,{\rm Li}_k(1/p^{l-1}q) \; |\; l=1,\dots,m\},
\]
where $p$ is a positive integer.
\end{remark}
\begin{remark}
It would be interesting to obtain an application of our criterion, or of a 
suitable quantitative version of it, to the linear independence of values 
of $G$-functions at several points, in the spirit of \cite[Theorem 1]{Chudnovskys}, 
whose proof uses Hermite-Pad\'e approximations of type II. 
\end{remark}

\section*{Acknowledgments}
The core of the present note, a short less-than-seven-pages draft without applications, 
was written several years ago partly during, partly after a stay at the Centre International 
de Rencontres Math\'ematiques de Luminy, France. However, my attention to the topic was 
recently refreshed by the papers \cite{DavidHirata-KohnoKawashima} and \cite{ViolaZudilin}, 
and specially by the proof of \cite[Lemma 5.3]{ViolaZudilin}. Along the time, I 
had the pleasure of chatting on this topic with F.Amoroso, M.Laurent and W.Zudilin;
a special thank to them, and to whom else made all this possible, in a way or another. 

Some computations in Sect. 4 were made with the help of the free software Pari/GP \cite{PARI}.

\end{document}